\newcommand{\set}[1]{\mathcal{#1}}
\newcommand{\R}{\mathbb{R}}
\DeclareMathOperator{\diag}{diag}
\theoremstyle{plain}
\newtheorem{thm}{Theorem}[section]
\newtheorem{lem}[thm]{Lemma}
\newtheorem{cor}[thm]{Corollary}
\theoremstyle{definition}
\newtheorem{exmp}{Example}[section]
\theoremstyle{remark}
\newtheorem{rmk}{Remark}[section]
\begin{document}

%
%
%
%
%

\title{Generalized Sharp Bounds on the Spectral Radius of Digraphs}
\author{Brian~K.~Butler and Paul~H.~Siegel \\Department of Electrical and Computer Engineering, \\University of California San~Diego, La~Jolla, CA 92093 USA}
\date{Nov 30, 2012}
\maketitle

\begin{abstract}
The spectral radius $\rho(G)$ of a digraph $G$ is the maximum modulus of the eigenvalues of its adjacency matrix.  We present bounds on $\rho(G)$ that are often tighter and are applicable to a larger class of digraphs than previously reported bounds. Calculating the final bound pair is particularly suited to sparse digraphs.

For strongly connected digraphs, we derive equality conditions for the bounds, relating to the outdegree regularity of the digraph. We also prove that the bounds hold with equality only if $\rho(G)$ is the $r$-th root of an integer, where $r$ divides the index of imprimitivity of $G$.
\end{abstract}

\section{Introduction}
\label{sectintro}
Let $A=(a_{ij})$, $1\le i,j \le n$ be an $n \times n$ matrix over the complex numbers. 
The eigenvalues of $A$ are the complex roots of the characteristic equation $\det(A-\mu I)=0$.
The set of distinct eigenvalues is called the \emph{spectrum} of $A$, denoted $\sigma(A) = \{\mu_1,\ldots,\mu_m\}$,
and the \emph{spectral radius} of $A$ is the real number $\rho(A) = \max\left\{|\mu| : \mu \in \sigma(A)\right\}$.
The matrix $A$ is said to be \emph{nonnegative}, denoted $A\ge0$, if every entry $a_{ij}$ is real and greater than or equal to zero.
Specializing to vectors, we say that the complex column vector $x=(x_1,\ldots,x_n)^T$ is \emph{nonnegative}, or $x \ge 0$, if every element is real and greater than or equal to zero. If every element $x_i$ is real and strictly greater than zero, we say that $x$ is \emph{positive}, or $x > 0$.
It is well known (see \citet[p. 503]{Horn}) that the spectral radius of a nonnegative matrix $A$ is an eigenvalue; that is, $\rho(A) \in \sigma(A)$.

Now, let $G=(V,E)$ be a directed graph, or \emph{digraph}, defined by the vertex set $V=\{v_1, \ldots, v_n\}$ and the collection $E$ of directed edges,
or \emph{arcs}, between ordered pairs of vertices. 
The \emph{adjacency matrix} of $G$, denoted $A(G)$, is the nonnegative matrix whose $(i,j)$-th entry is the number of arcs from vertex $v_i$ to vertex $v_j$.  
The \emph{spectral radius} $\rho(G)$ of the digraph $G$ is defined to be the spectral radius of $A(G)$.

In this paper, we derive several new bounds on the spectral radius of nonnegative matrices which we then use to bound the spectral radius of a large class of digraphs.
Our results generalize those found in \citet{Zhang}, \citet{Kolo}, \citet{Xu}, and \citet{Gungor}.  
With respect to the bounds of \citet{Liu}, we find new equality conditions when they are applied to digraphs.
(For a recent survey on prior work in this area, see \citet{Brualdi2010}.)

The nonnegative $n\times n$ matrix $A$, with $n\ge 2$, is said to be \emph{reducible} if there exists a permutation matrix $P$ 
such that $P A P^T = \bigl(\begin{smallmatrix} X&Y\\0&Z\end{smallmatrix}\bigr)$
where $X$ and $Z$ are square submatrices. Otherwise, $A$ is said to be \emph{irreducible}.
Let $r_i(A)$ denote the sum of the elements along the $i$-th row of $A$; that is
\begin{equation*}
r_i(A)=\sum_{j=1}^n a_{ij}
\end{equation*}
for $i\in \{1,\ldots,n\}$.
The following classical result gives a bound on the spectral radius of the nonnegative matrix $A$ in terms of its row sums.


\begin{thm}[Frobenius] 
\label{thmF}
Let $A=(a_{ij})$ be an $n \times n$ nonnegative matrix with spectral radius $\rho(A)$ and row sums $r_i(A)$, $i\in \{1,\ldots,n\}$.
Then
\begin{equation}
\label{eqF}
\min_i r_i(A)  \le \rho(A) \le \max_i  r_i(A).
\end{equation}
Moreover, if $A$ is an irreducible matrix, then equality holds on either side (and hence both sides) of \eqref{eqF} if and only if all row sums of $A$ are equal. 
\end{thm}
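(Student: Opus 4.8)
The plan is to prove the Frobenius theorem in two parts: the inequality chain first, then the equality characterization for irreducible matrices.

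For the inequalities \eqref{eqF}, the cleanest approach I would take uses the Perron--Frobenius eigenvector. Since $A \ge 0$, we know $\rho(A) \in \sigma(A)$, and one can find a nonnegative eigenvector $x \ge 0$, $x \neq 0$, with $Ax = \rho(A) x$. First I would examine the components: for each $i$, the $i$-th row of the eigenvalue equation gives $\rho(A) x_i = \sum_j a_{ij} x_j$. To get the upper bound, pick the index $k$ with $x_k = \max_i x_i > 0$; then $\rho(A) x_k = \sum_j a_{kj} x_j \le \bigl(\sum_j a_{kj}\bigr) x_k = r_k(A)\, x_k$, so $\rho(A) \le r_k(A) \le \max_i r_i(A)$. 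For the lower bound I would pick the index $\ell$ minimizing $x_\ell$ over the \emph{support} of $x$ — one must be slightly careful here since $x$ may have zero components in the reducible case; restricting attention to $\{i : x_i > 0\}$ and using $a_{ij} \ge 0$ handles it, giving $\rho(A) x_\ell = \sum_j a_{\ell j} x_j \ge \sum_{j : x_j>0} a_{\ell j} x_j \ge$ (something $\le r_\ell$) times $x_\ell$ — actually to make the lower bound clean it is easier to just invoke the upper bound applied to a suitable matrix, or alternatively restrict to an irreducible diagonal block on which the spectral radius is attained. A tidy alternative avoiding eigenvector support issues entirely: apply the already-proven upper bound logic together with continuity by perturbing $A$ to $A + \epsilon J$ (all-ones matrix) which is positive hence irreducible, deduce the strict-support version, then let $\epsilon \to 0$; row sums change continuously and the bound passes to the limit.

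For the equality statement, assume $A$ is irreducible. The ``if'' direction is immediate: if all row sums equal some constant $c$, then $A \mathbf{1} = c\mathbf{1}$ where $\mathbf{1}$ is the all-ones vector, so $c$ is an eigenvalue; since $\mathbf{1} > 0$ and $A$ is irreducible, Perron--Frobenius theory forces $c = \rho(A)$, giving equality on both sides of \eqref{eqF}. The ``only if'' direction is where the real work lies. Suppose $\rho(A) = \max_i r_i(A)$. By irreducibility the Perron eigenvector $x$ is strictly positive, $x > 0$. Return to the inequality $\rho(A) x_k = \sum_j a_{kj} x_j \le r_k(A) x_k \le \max_i r_i(A)\, x_k = \rho(A) x_k$ at the maximizing index $k$; equality throughout forces $\sum_j a_{kj} x_j = (\sum_j a_{kj}) x_k$, i.e. $\sum_j a_{kj}(x_k - x_j) = 0$ with every term $\ge 0$, hence $a_{kj}(x_k - x_j) = 0$ for all $j$. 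So whenever there is an arc $k \to j$ (i.e. $a_{kj} > 0$) we get $x_j = x_k = \max_i x_i$, meaning $j$ also achieves the maximum of $x$.

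The main obstacle is propagating this last observation through the whole graph using irreducibility. The argument: let $S = \{i : x_i = \max_j x_j\}$; we have shown $S \neq \emptyset$ and that $S$ is ``closed under following arcs'' — if $i \in S$ and $a_{ij} > 0$ then $j \in S$. But the vertices are indexed so that we only verified the closure property starting from $k$; we need it for \emph{every} $i \in S$, which in turn requires knowing $r_i(A) = \rho(A)$ for each such $i$, and that is not yet established. The fix is to observe that once $j \in S$ we can rerun the equality analysis at $j$ provided $r_j(A) = \max_i r_i(A)$; to see the latter, note $\rho(A) x_j = \sum_\ell a_{j\ell} x_\ell \le r_j(A) x_j \le \max_i r_i(A)\, x_j = \rho(A) x_j$, forcing $r_j(A) = \max_i r_i(A)$ and equality throughout, so indeed $S$ is closed under arcs from each of its members. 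Now if $S \neq V$, the partition $(S, V \setminus S)$ with no arcs from $S$ to $V \setminus S$ exhibits $A$ as reducible (after permuting $S$ to the front, the lower-left block vanishes) — wait, reducibility needs no arcs from the \emph{second} block to the first, so I would instead note there are no arcs leaving $S$, put $S$ last, and get the block form with a zero block — contradicting irreducibility. Hence $S = V$, all components of $x$ are equal, so $x$ is a positive multiple of $\mathbf{1}$, and then $A\mathbf{1} = \rho(A)\mathbf{1}$ says every row sum equals $\rho(A)$. The case of equality on the left, $\min_i r_i(A) = \rho(A)$, is symmetric: run the same argument with the roles of max/min reversed (or apply the right-hand case to a matrix with the same zero pattern, e.g. via the transpose and the fact that $\rho(A) = \rho(A^T)$ together with irreducibility of $A^T$), concluding again that all row sums are equal.
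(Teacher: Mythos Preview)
The paper does not actually prove this theorem; it simply cites Minc~\cite[pp.~24--26]{Minc}. Your proposal, by contrast, supplies the standard direct argument, and it is essentially correct. The upper bound via the maximal component of a nonnegative Perron eigenvector is clean; for the lower bound in the general (possibly reducible) case you correctly identify the support issue and your perturbation fix $A \mapsto A + \epsilon J$ with $\epsilon \downarrow 0$ is valid. The equality analysis for irreducible $A$ --- showing that the set $S$ of indices at which $x$ attains its maximum is nonempty, closed under outgoing arcs, and hence all of $\{1,\ldots,n\}$ by irreducibility --- is exactly the right idea, and you handle the one subtlety (re-establishing the chain of equalities at each newly reached vertex of $S$) correctly.

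One small slip: your parenthetical suggestion to handle the left-equality case ``via the transpose'' does not close as stated, since applying the right-equality case to $A^T$ shows only that all \emph{column} sums of $A$ are equal, which is not the desired conclusion. Your primary suggestion --- rerun the same argument with $\max$ and $\min$ interchanged, using the minimal component of the strictly positive Perron eigenvector --- is the correct route and goes through without change.
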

\begin{proof}
See \citet[pp. 24--26]{Minc}. 
\end{proof}
\begin{rmk}
Theorem~\ref{thmF} also applies to column sums since $\rho(A^T)=\rho(A)$.
\end{rmk}

Again the following bound on the spectral radius is well known \cite{Horn,Zhang}.
In the proof we use the concept of the \emph {sparsity pattern} of a complex matrix, which refers to the locations of its nonzero entries.

\begin{thm}
\label{thmW}
Let $A$ be an $n \times n$ nonnegative matrix with spectral radius $\rho(A)$ and $x=(x_1,\ldots,x_n)^T$ be a positive column vector. Then
\begin{equation}
\label{eqW}
\min_{1\le i\le n} \left[ \frac{(Ax)_i}{x_i} \right] \le \rho(A) \le \max_{1\le i\le n} \left[ \frac{(Ax)_i}{x_i} \right].
\end{equation}
Moreover, if $A$ is an irreducible matrix, then equality holds on either side (and hence both sides) of \eqref{eqW} if and only if
the vector $x$ is an eigenvector of $A$ corresponding to $\rho(A)$.
\end{thm}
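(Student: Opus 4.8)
The plan is to reduce Theorem~\ref{thmW} to the Frobenius bound of Theorem~\ref{thmF} by means of a diagonal similarity transformation. Set $D=\diag(x_1,\ldots,x_n)$, which is invertible because $x>0$, and consider $B=D^{-1}AD$. Since $B$ is similar to $A$, we have $\sigma(B)=\sigma(A)$, and in particular $\rho(B)=\rho(A)$. A direct computation shows that the $(i,j)$ entry of $B$ is $x_i^{-1}a_{ij}x_j\ge 0$, so $B$ is again nonnegative, and its $i$-th row sum is
\begin{equation*}
r_i(B)=\sum_{j=1}^n \frac{a_{ij}x_j}{x_i}=\frac{(Ax)_i}{x_i}.
\end{equation*}
Applying the inequality \eqref{eqF} of Theorem~\ref{thmF} to $B$ then yields \eqref{eqW} at once.

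For the equality statement, the key observation is that multiplying $A$ on the left and right by the positive diagonal matrices $D^{-1}$ and $D$ does not change the sparsity pattern of $A$; hence $B$ has the same zero/nonzero structure as $A$, and so $B$ is irreducible whenever $A$ is. By the equality clause of Theorem~\ref{thmF} applied to $B$, equality holds on one (and hence both) sides of \eqref{eqW} if and only if all row sums of $B$ are equal, say to a common value $c$. When this happens, $c=\max_i r_i(B)=\rho(B)=\rho(A)$, and the condition $r_i(B)=c$ for all $i$ reads $(Ax)_i=\rho(A)\,x_i$, i.e.\ $Ax=\rho(A)x$; thus $x$ is an eigenvector of $A$ corresponding to $\rho(A)$. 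Conversely, if $Ax=\rho(A)x$, then $(Ax)_i/x_i=\rho(A)$ for every $i$, so both bounds in \eqref{eqW} are attained.

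There is no substantial obstacle in this argument; the only points requiring a little care are the elementary facts that diagonal similarity preserves both the spectral radius and the sparsity pattern (hence irreducibility), and the observation that the common row-sum value forced by equality is necessarily $\rho(A)$ rather than some unrelated constant. Both of these follow immediately from Theorem~\ref{thmF}.
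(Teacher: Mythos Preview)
Your proof is correct and follows essentially the same route as the paper's: both arguments conjugate $A$ by $D=\diag(x_1,\ldots,x_n)$, identify the row sums of $D^{-1}AD$ with $(Ax)_i/x_i$, and then invoke Theorem~\ref{thmF} for both the inequality and the equality condition. The only cosmetic difference is that you spell out explicitly why the common row-sum value must equal $\rho(A)$, which the paper leaves implicit.
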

\begin{proof}
By the assumption that $x_i>0,$ for all $i\in \{1, \ldots, n\}$, the diagonal matrix $D=\diag(x_1,\ldots,x_n)$ is invertible.
Since $A$ and $D^{-1}A D$ are similar matrices, they have identical eigenvalues, and therefore $\rho(A) = \rho(D^{-1}A D)$.
The row sums of $D^{-1}A D$ are given by
\begin{equation}
\label{eqW1}
r_i \left(D^{-1}A D\right)=\sum_{j=1}^n \frac{a_{ij} x_j}{x_i} = \frac{(A x)_i}{x_i},
\end{equation}
for all $i \in \{1,\ldots,n\}$.
Thus, Theorem~\ref{thmF}, with $D^{-1}A D$ substituted for $A$, implies \eqref{eqW}.

Since $A$ and $D^{-1}A D$ have identical sparsity patterns, $A$ is irreducible if and only if $D^{-1}A D$ is irreducible.
Therefore, if either equality in \eqref{eqW} holds, then by the equality condition of Theorem~\ref{thmF}, \eqref{eqW1} equals $\rho(A)$ for all $i$,
which yields $A x = \rho(A) x$, as desired.
Conversely, if $x>0$ and  $A x = \rho(A) x$, then the row sums in \eqref{eqW1} are equal to $\rho(A)$ for all $i$, forcing equality on both sides of \eqref{eqW}.
\end{proof}

\section{Spectral Bounds for Nonnegative Matrices} 
\label{sectMatrix}
In this section, we characterize the spectral radius of nonnegative matrices with nonzero row sums.
It is well known \cite{Xu,Liu} that deleting the zero rows and their corresponding columns 
(\textit{i.e.}, the columns having the same indices as the zero rows) 
leaves unaffected the nonzero entries in the spectrum of a matrix.  
Since the column removal may reveal new all-zero rows, this process may have to be applied multiple times to finally 
produce a matrix with nonzero row sums.  
Once this is achieved, the bounds of this section may be applied to the reduced matrix.

Let $A=(a_{ij})$ be an $n \times n$ matrix.  
We denote the $(i,j)$th entry of matrix $A^k$ by $a_{ij}^{(k)}$, noting that
\begin{equation*}
a_{ij}^{(k)}= \overbrace{\sum_{s=1}^n\sum_{t=1}^n\cdots \sum_{y=1}^n}^{k-1 \text{ sums}} 
\overbrace {a_{is}a_{st}\cdots a_{yj}}^{k \text{ terms}}\quad\text{ and }\quad a_{ij}^{(0)}=\delta_{ij},
\end{equation*}
where $\delta_{ij}$ is the Kronecker delta.   
Let $r_i(A^k)$ denote the sum of the $i$th row of $A^k$, that is, $r_i(A^k)= \sum_{j=1}^n a_{ij}^{(k)}$.  
Using the fact that, for any $n \times n$ matrix $B$, the row sums of the product $AB$ are given by
\begin{equation}
\label{genRS}
r_i(AB)=  \sum_{j=1}^n \sum_{k=1}^n a_{ij} b_{jk} = \sum_{j=1}^n a_{ij} r_j(B),
\end{equation}
we can derive additional useful row-sum expressions such as 
\begin{equation}
\label{rowsum_order_k}
r_i(A^k)= \sum_{j=1}^n a_{ij}^{(k-t)}r_j(A^t),
\end{equation}
for all $0\le t\le k$.  
We will make frequent use of the column vector $x=(r_1(A^k),\ldots,r_n(A^k))^T$ and the diagonal matrix $D=\diag(r_1(A^k),\ldots,r_n(A^k))$, 
for some integer $k\ge0$.
Then, derived from \eqref{rowsum_order_k},  
\begin{equation}
\label{usefulx}
 (A^t x)_i = \sum_{j=1}^n a_{ij}^{(t)}r_j(A^k) = r_i(A^{t+k})
\end{equation}
and, assuming that the row sums of $A^k$ are nonzero,
\begin{equation}
\label{usefulD}
 r_i(D^{-1} A^t D) = \frac{\sum_{j=1}^n a_{ij}^{(t)}r_j(A^k)}{r_i(A^{k})} = \frac{r_i(A^{t+k})}{r_i(A^{k})},
\end{equation}
for any $t \ge 0$ and all $i \in \{1,\ldots,n\}$. 
Also, as \citet{Liu} remarked, if the row sums of a nonnegative matrix $A$ are nonzero, then so are the row sums of $A^k$, for $k\ge0$.
After the following lemma, we will use these high-order row sums to bound the spectrum of $A^L$, for any $L>0$. 
Recall from Perron-Frobenius theory that irreducible matrices have a unique (up to a scale factor) positive eigenvector corresponding to the spectral radius. 

\begin{lem} 
\label{irred-red}
Let $A$ be an $n \times n$ nonnegative matrix with spectral radius $\rho(A)$.
If $A^L$ is irreducible, for some $L> 0$, then $A$ is also irreducible and
the positive eigenvectors of $A$ and $A^L$ agree up to a scale factor.
\end{lem}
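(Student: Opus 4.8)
The plan is to prove the contrapositive of the first claim and then pin down the eigenvector relationship using Perron--Frobenius theory. First I would suppose $A$ is reducible; then there is a permutation matrix $P$ with $PAP^T = \bigl(\begin{smallmatrix} X&Y\\0&Z\end{smallmatrix}\bigr)$, $X$ and $Z$ square. Since $(PAP^T)^L = PA^LP^T$ and the block-upper-triangular structure is preserved under multiplication (the lower-left block stays zero, as it is a sum of products each containing at least one factor from the zero block in a position that forces a zero), we get $PA^LP^T = \bigl(\begin{smallmatrix} X^L& \ast\\0&Z^L\end{smallmatrix}\bigr)$, so $A^L$ is reducible. Contrapositively, $A^L$ irreducible implies $A$ irreducible.

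Next, assume $A^L$ is irreducible, hence $A$ is irreducible by the above. By Perron--Frobenius, $A$ has a positive eigenvector $x$ with $Ax = \rho(A)x$; multiplying by $A$ repeatedly gives $A^L x = \rho(A)^L x$, so $x$ is a positive eigenvector of $A^L$ for the eigenvalue $\rho(A)^L$. Since $A^L$ is irreducible, Perron--Frobenius guarantees its positive eigenvector is unique up to scaling, and its associated eigenvalue is $\rho(A^L)$; since the spectrum of $A^L$ is $\{\mu^L : \mu \in \sigma(A)\}$ and $\rho(A^L) = \rho(A)^L$ is already known, the positive eigenvector of $A^L$ must be a scalar multiple of $x$. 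This gives the agreement of the positive eigenvectors up to a scale factor.

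The main obstacle, such as it is, is purely bookkeeping: verifying carefully that the class of block-upper-triangular matrices (with respect to a fixed partition of the index set) is closed under matrix multiplication, so that reducibility of $A$ propagates to every power $A^L$. This is a routine computation with the block entries, but it is the one place where a careless argument could slip; everything else is a direct invocation of Perron--Frobenius, namely existence and uniqueness (up to scale) of the positive eigenvector of an irreducible nonnegative matrix together with the fact that the Perron eigenvalue equals the spectral radius.
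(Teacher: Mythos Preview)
Your proposal is correct and follows essentially the same route as the paper's proof: the contrapositive argument via preservation of block upper-triangular form under powers, followed by pushing the Perron eigenvector of $A$ through to $A^L$ and invoking uniqueness of the positive eigenvector of the irreducible matrix $A^L$. The only minor redundancy is your separate appeal to $\sigma(A^L)=\{\mu^L:\mu\in\sigma(A)\}$ to identify the eigenvalue; once $x>0$ satisfies $A^L x=\rho(A)^L x$, Perron--Frobenius for the irreducible $A^L$ already forces this eigenvalue to be $\rho(A^L)$ and $x$ to be the unique positive eigenvector up to scale, so that step can be dropped.
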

\begin{proof}
First take $A$ to be reducible.
In this case, there exists a permutation matrix such that $P A P^T = \bigl(\begin{smallmatrix} X&Y\\0&Z\end{smallmatrix}\bigr)$.
Clearly $P A^L P^T$ is also upper triangular, and hence $A^L$ is reducible too (a contradiction).

Since $A$ is irreducible, it has a unique positive eigenvector, which we denote $x$, corresponding to $\rho(A)$.
Repeatedly left-multiplying $A x=\rho(A) x$ by $A$ implies that $A^{t} x = \rho(A)^{t} x = \rho(A^t) x$, for any integer $t\ge 0$.
In particular, setting $t=L$, we conclude that $x$ is the unique (up to a scale factor) positive eigenvector of $A^L$
\end{proof}

\begin{thm}[Liu \cite{Kolo,Liu}]
\label{thmM1}
Let $A$ be an $n \times n$ nonnegative matrix with spectral radius $\rho(A)$ and row sums $r_1(A),\ldots,r_n(A)$, all nonzero. 
Then, for any integers $L>0$ and $k \ge 0$,
\begin{equation}
\label{eqM1}
\min_{1\le i\le n}  \left( {\frac{r_i\left(A^{k+L}\right)}{r_i(A^k)}} \right)^{1/L}\mspace{-10.0mu} \le \rho(A) \le 
\max_{1\le i\le n} \left( {\frac{r_i\left(A^{k+L}\right)}{r_i(A^k)}} \right)^{1/L}\mspace{-10.0mu}.
\end{equation}
Moreover, if $A^L$ is an irreducible matrix,
then equality holds on either side (and hence both sides) of \eqref{eqM1} if and only if 
$x=(r_1(A^k),\ldots,r_n(A^k))^T$ is an eigenvector of $A$. 
\end{thm}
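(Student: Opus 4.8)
The plan is to deduce this theorem from Theorem~\ref{thmW} applied to the matrix $A^L$ with the specific choice of positive vector $x=(r_1(A^k),\ldots,r_n(A^k))^T$. First I would note that, since the row sums of $A$ are all nonzero, the remark of \citet{Liu} quoted above guarantees the row sums of $A^k$ are nonzero, so $x$ is a genuine positive vector and Theorem~\ref{thmW} applies. By \eqref{usefulx} we have $(A^L x)_i = r_i(A^{k+L})$, so the ratio appearing in Theorem~\ref{thmW} is exactly $(A^L x)_i/x_i = r_i(A^{k+L})/r_i(A^k)$. Theorem~\ref{thmW} then yields
\begin{equation*}
\min_{1\le i\le n} \frac{r_i(A^{k+L})}{r_i(A^k)} \le \rho(A^L) \le \max_{1\le i\le n} \frac{r_i(A^{k+L})}{r_i(A^k)}.
\end{equation*}
Since $\rho(A^L)=\rho(A)^L$ and $t\mapsto t^{1/L}$ is increasing on the nonnegative reals, taking $L$-th roots throughout gives \eqref{eqM1}.

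For the equality conditions, suppose $A^L$ is irreducible. By Lemma~\ref{irred-red}, $A$ itself is irreducible and its positive Perron eigenvector coincides (up to scaling) with that of $A^L$. Now I would invoke the equality clause of Theorem~\ref{thmW}, applied to $A^L$: equality holds on either side (hence both) of the displayed two-sided bound above if and only if $x$ is an eigenvector of $A^L$ corresponding to $\rho(A^L)$. Taking $L$-th roots preserves equality in both directions, so equality in \eqref{eqM1} is equivalent to $x$ being a $\rho(A^L)$-eigenvector of $A^L$. It remains to upgrade "$x$ is a $\rho(A^L)$-eigenvector of $A^L$" to "$x$ is an eigenvector of $A$." One direction is immediate: if $Ax=\rho(A)x$ then left-multiplying by $A^{L-1}$ gives $A^L x = \rho(A)^L x = \rho(A^L) x$. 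For the converse, since $A^L$ is irreducible its $\rho(A^L)$-eigenspace is one-dimensional and spanned by a positive vector; as $x>0$ lies in it, $x$ is the (scaled) Perron vector of $A^L$, which by Lemma~\ref{irred-red} is the Perron vector of $A$, so $Ax=\rho(A)x$. This establishes the stated biconditional.

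The only mild subtlety — and the step I would treat most carefully — is the equivalence between "$x$ is an eigenvector of $A$" and "$x$ is an eigenvector of $A^L$ for $\rho(A^L)$," since a priori $x$ could be an eigenvector of $A^L$ for some other eigenvalue of modulus $\rho(A^L)$, or $A^L$ might be reducible with a higher-dimensional Perron eigenspace. Here the hypothesis that $A^L$ is irreducible does the work: it forces the relevant eigenspace to be one-dimensional and positive, pinning $x$ down exactly, and Lemma~\ref{irred-red} bridges between the Perron vectors of $A$ and $A^L$. With irreducibility of $A^L$ in hand, the argument is otherwise a routine chain of substitutions via \eqref{usefulx} and monotonicity of the $L$-th root.
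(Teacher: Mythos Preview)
Your proposal is correct and follows essentially the same approach as the paper's proof: apply Theorem~\ref{thmW} to $A^L$ with $x=(r_1(A^k),\ldots,r_n(A^k))^T$, invoke \eqref{usefulx} and $\rho(A^L)=\rho(A)^L$, and then use Lemma~\ref{irred-red} to pass between eigenvectors of $A^L$ and $A$ for the equality condition. Your treatment of the equality clause is slightly more detailed than the paper's, but the argument is the same.
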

\begin{proof}  
By successively left-multiplying $A x = \rho(A) x$ by $A$ is easy to show that $\rho(A^L)=\rho(A)^L$.
Let $x=(r_1(A^k),\ldots,r_n(A^k))^T$. 
Then, by \eqref{usefulx}, we know $(A^L x)_i = r_i(A^{k+L})$,
and we may confirm \eqref{eqM1} by applying Theorem~\ref{thmW} to $A^L$ and $x$.

Now assume that $A^L$ is irreducible.
The application of Theorem~\ref{thmW} provided the equality condition that $x$ be an eigenvector of $A^L$.
Finally, applying Lemma~\ref{irred-red}, shows that equality holds if and only if $x$ is an eigenvector of $A$.
\end{proof}
\begin{rmk}
The nonzero row assumption is not needed when $k=0$, since $r_i(A^0)=1$ and $x$ will still be a positive vector in this case.
As discussed later, these bounds are more general than those of \citet{Zhang}.
\end{rmk}

The following theorem presents another useful result from Liu \cite{Liu} that we will use later.
It shows that, as functions of the index $k$, the upper and lower bounds of \eqref{eqM1} form monotonically non-increasing and non-decreasing sequences, respectively.

\begin{thm} 
\label{thmMono}
Let $A$ be an $n \times n$ nonnegative matrix with nonzero row sums.
Then, for any integer $L>0$,
\begin{equation*}
\min_{1\le j\le n} \left[ \frac{r_j\left(A^{k+L}\right)}{r_j(A^k)}  \right]\le
                 \frac{r_i\left(A^{k+1+L}\right)} {r_i(A^{k+1})}\le
\max_{1\le j\le n} \left[ \frac{r_j\left(A^{k+L}\right)}{r_j(A^k)}  \right] ,
\end{equation*}
for all $k\ge0$ and $i \in \left\{ 1,\ldots,n \right\}$ . 
\end{thm}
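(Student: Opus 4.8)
The plan is to recognize each middle quantity $r_i(A^{k+1+L})/r_i(A^{k+1})$ as a nonnegatively weighted average of the $n$ ratios $r_j(A^{k+L})/r_j(A^k)$, the weights coming from the $i$th row of $A$, and then to invoke the elementary fact that such a weighted average always lies between the smallest and the largest of the quantities being averaged.

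First I would fix $k\ge 0$ and set $m=\min_{1\le j\le n} r_j(A^{k+L})/r_j(A^k)$ and $M=\max_{1\le j\le n} r_j(A^{k+L})/r_j(A^k)$. These are well defined because the hypothesis that $A$ has nonzero row sums forces $r_j(A^k)>0$ for every $j$ and every $k\ge 0$ (the remark attributed to Liu, following \eqref{usefulD}). From the definitions we immediately obtain the two-sided estimate $m\,r_j(A^k)\le r_j(A^{k+L})\le M\,r_j(A^k)$ for each $j$. Next I would expand numerator and denominator of the middle term using the product row-sum identity \eqref{genRS}: writing $A^{k+1}=A\cdot A^{k}$ and $A^{k+1+L}=A\cdot A^{k+L}$ gives $r_i(A^{k+1})=\sum_{j} a_{ij}\,r_j(A^k)$ and $r_i(A^{k+1+L})=\sum_{j} a_{ij}\,r_j(A^{k+L})$.

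To finish, I would multiply the per-$j$ inequalities above by the nonnegative coefficients $a_{ij}$ and sum over $j$, which preserves the inequality direction precisely because $A\ge 0$; this yields $m\,r_i(A^{k+1})\le r_i(A^{k+1+L})\le M\,r_i(A^{k+1})$. Dividing through by $r_i(A^{k+1})$, which is strictly positive by the same nonzero-row-sum fact, gives exactly the asserted chain of inequalities, uniformly in $i\in\{1,\ldots,n\}$ and $k\ge 0$. There is no genuinely hard step; the only points requiring care are the nonvanishing of the denominators $r_j(A^k)$ and $r_i(A^{k+1})$ (this is what the nonzero-row-sum hypothesis buys us) and the nonnegativity of the weights $a_{ij}$, without which scaling an inequality by $a_{ij}$ could reverse it.
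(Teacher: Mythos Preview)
Your argument is correct: expressing $r_i(A^{k+1+L})$ and $r_i(A^{k+1})$ via \eqref{genRS} as $\sum_j a_{ij}r_j(A^{k+L})$ and $\sum_j a_{ij}r_j(A^k)$, and then sandwiching with the nonnegative weights $a_{ij}$, gives exactly the desired chain once you divide by $r_i(A^{k+1})>0$. The paper does not supply an independent proof of this theorem but simply refers the reader to \citet[Theorem~3.3]{Liu}; your self-contained weighted-average derivation is the standard elementary route and presumably mirrors Liu's original argument.
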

\begin{proof}
See \citet[Theorem~3.3]{Liu}.
\end{proof}

The following theorem provides a generalization of the bounds in \citet{Xu}, but first we present a useful lemma.

\begin{lem} 
\label{lemEqEig}
Let $A$ be an $n \times n$ matrix with spectral radius $\rho(A)\ne 0$.   
If, for some $k\ge0$, $x=(r_1(A^k),\ldots,r_n(A^k))^T$ is an eigenvector of $A$ corresponding to $\rho(A)$, then so is 
$y=(r_1(A^{k+1}),\ldots,r_n(A^{k+1}))^T$, and $y =\rho(A) x$.
\end{lem}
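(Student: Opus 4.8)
The plan is to recognize that $y$ is nothing more than $Ax$, after which the statement collapses to a one-line consequence of linearity. First I would apply the identity \eqref{usefulx} with $t=1$: since $x=(r_1(A^k),\ldots,r_n(A^k))^T$, its $i$-th entry under $A$ is $(Ax)_i = \sum_{j=1}^n a_{ij}\,r_j(A^k) = r_i(A^{k+1}) = y_i$. This is precisely the $t=1$ instance of \eqref{usefulx}, which — unlike its companion \eqref{usefulD} — requires no nonzero-row-sum hypothesis; alternatively it follows directly from \eqref{genRS} applied with $B=A^k$. Hence $y = Ax$ as column vectors.

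Next, I would invoke the hypothesis that $x$ is an eigenvector of $A$ corresponding to $\rho(A)$, i.e. $Ax = \rho(A)x$. Combining this with $y = Ax$ immediately gives $y = \rho(A)x$, which is the claimed identity relating $y$ and $x$.

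Finally, to conclude that $y$ is itself an eigenvector of $A$ associated with $\rho(A)$: because $x$ is an eigenvector it is nonzero, and because $\rho(A)\ne 0$ the vector $y = \rho(A)x$ is a nonzero scalar multiple of $x$; therefore $Ay = \rho(A)(Ax) = \rho(A)^2 x = \rho(A)y$, so $y$ is an eigenvector of $A$ corresponding to $\rho(A)$. There is essentially no obstacle here: the only point warranting a line of care is the justification of $(Ax)_i = r_i(A^{k+1})$ via the row-sum identities, and the assumption $\rho(A)\ne 0$ is used solely to ensure $y\ne 0$ so that the conclusion is not vacuous.
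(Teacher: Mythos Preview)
Your argument is correct and follows essentially the same route as the paper's own proof: compute $(Ax)_i = \sum_j a_{ij}\,r_j(A^k) = r_i(A^{k+1}) = y_i$ to identify $y = Ax$, then invoke the eigenvector hypothesis to conclude $y = \rho(A)x$. The paper's version is terser and leaves the verification that $y$ is nonzero (and hence an eigenvector) implicit, whereas you spell out the role of the hypothesis $\rho(A)\ne 0$; but there is no substantive difference in method.
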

\begin{proof}
Since $(A x)_i = \sum_{j=1}^n a_{ij} r_j(A^k) =  r_i(A^{k+1}) = y_i$ for all $i\in \{1,\ldots,n\}$, then $A x = y = \rho(A) x$.
\end{proof}

\begin{thm}
\label{thmXuXuMat}
Let $A$ be an $n \times n$ nonnegative matrix with spectral radius $\rho(A)$ and nonzero row sums. 
Then, for any integers $M>0$, $N\ge0$, and $k\ge0$,
\begin{equation}
\label{eqX1}
\min_{\substack{1\le i\le n\\1\le j\le n}} \left[ \left( {\frac{r_i\left(A^{k+M}\right) r_j\left(A^{k+N}\right)}{r_i(A^k) \, r_j(A^k)}} \right)^{\frac{1}{M+N}}\mspace{-12.0mu} : a_{ij}^{(M)}>0 \right]
\le \rho(A) \le 
\max_{\substack{1\le i\le n\\1\le j\le n}} \left[ \left( {\frac{r_i\left(A^{k+M}\right) r_j\left(A^{k+N}\right)}{r_i(A^k) \, r_j(A^k)}} \right)^{\frac{1}{M+N}}\mspace{-12.0mu} : a_{ij}^{(M)}>0 \right].
\end{equation}
Moreover, if $A^{M+N}$ is an irreducible matrix, then equality holds in either side (and hence both sides) of \eqref{eqX1} if and only if
$x=(r_1(A^k),\ldots,r_n(A^k))^T$ is an eigenvector of $A$. 
\end{thm}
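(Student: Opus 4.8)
The plan is to derive \eqref{eqX1} from Theorem~\ref{thmM1} (Liu's bound) applied with exponent $L=M+N$, supplying the missing link with a single ``averaging'' estimate on row sums, and then to read off the equality conditions from those of Theorem~\ref{thmM1}. Throughout, set $x=(r_1(A^k),\ldots,r_n(A^k))^T$, which is positive because the row sums of $A^k$ are nonzero, and let $\alpha$ and $\beta$ denote the left- and right-hand sides of \eqref{eqX1}, respectively.

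First I would record the two row-sum identities $r_i(A^{k+M+N})=\sum_j a_{ij}^{(M)}r_j(A^{k+N})$ and $r_i(A^{k+M})=\sum_j a_{ij}^{(M)}r_j(A^k)$, both instances of \eqref{rowsum_order_k}. Writing $r_j(A^{k+N})=\bigl(r_j(A^{k+N})/r_j(A^k)\bigr)r_j(A^k)$ then exhibits $r_i(A^{k+M+N})$ as a nonnegative-weighted combination of the numbers $r_j(A^{k+N})/r_j(A^k)$ over the index set $\{\,j:a_{ij}^{(M)}>0\,\}$, with weights $a_{ij}^{(M)}r_j(A^k)$ that sum to $r_i(A^{k+M})$; this set is nonempty since the row sums of $A^M$ are nonzero. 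Bounding the combination between its extreme terms and dividing through by $r_i(A^k)$ gives, for each $i$,
\[
\min_{j:a_{ij}^{(M)}>0}\frac{r_i(A^{k+M})\,r_j(A^{k+N})}{r_i(A^k)\,r_j(A^k)}\ \le\ \frac{r_i(A^{k+M+N})}{r_i(A^k)}\ \le\ \max_{j:a_{ij}^{(M)}>0}\frac{r_i(A^{k+M})\,r_j(A^{k+N})}{r_i(A^k)\,r_j(A^k)}.
\]
Taking the minimum over $i$ in the left inequality and the maximum over $i$ in the right one yields $\alpha^{M+N}\le\min_i r_i(A^{k+M+N})/r_i(A^k)$ and $\max_i r_i(A^{k+M+N})/r_i(A^k)\le\beta^{M+N}$. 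Since Theorem~\ref{thmM1} with $L=M+N$ sandwiches $\rho(A)^{M+N}$ between exactly those two quantities, chaining the inequalities and taking $(M+N)$-th roots proves \eqref{eqX1}.

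For the equality assertion, assume $A^{M+N}$ is irreducible. If equality holds on, say, the upper side of \eqref{eqX1}, then $\rho(A)^{M+N}=\beta^{M+N}$, and the chain above collapses to force $\max_i r_i(A^{k+M+N})/r_i(A^k)=\rho(A)^{M+N}$, which is precisely equality in the upper bound of \eqref{eqM1} with $L=M+N$; the equality clause of Theorem~\ref{thmM1} (applicable because $A^{M+N}$ is irreducible) then gives that $x$ is an eigenvector of $A$, and the lower side is symmetric. Conversely, if $x$ is an eigenvector of $A$, then $x>0$ and Lemma~\ref{irred-red} makes $A$ irreducible, so Perron-Frobenius (with the nonzero-row-sum hypothesis) forces $Ax=\rho(A)x$ with $\rho(A)>0$; iterating Lemma~\ref{lemEqEig} gives $r_j(A^{k+t})=\rho(A)^t r_j(A^k)$ for every $t\ge0$, whereupon each ratio occurring inside the extrema of \eqref{eqX1} equals $\rho(A)^{M+N}$ and both bounds coincide with $\rho(A)$.

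The one step needing genuine care is the averaging estimate: one must confirm that $\{\,j:a_{ij}^{(M)}>0\,\}$ is nonempty for every $i$ (which is exactly where the hypothesis that the row sums of $A$, hence of $A^M$, are nonzero enters) and that the associated weights sum to precisely $r_i(A^{k+M})$. Everything after that is routine bookkeeping with \eqref{rowsum_order_k}, together with invocations of Theorem~\ref{thmM1}, Lemma~\ref{irred-red}, and Lemma~\ref{lemEqEig}.
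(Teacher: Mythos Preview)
Your argument is correct and, at its core, identical to the paper's: the key inequality
\[
\frac{r_i(A^{k+M+N})}{r_i(A^k)}\ \le\ \frac{r_i(A^{k+M})}{r_i(A^k)}\cdot\max_{j:a_{ij}^{(M)}>0}\frac{r_j(A^{k+N})}{r_j(A^k)}
\]
is exactly the paper's estimate \eqref{2.4.3}--\eqref{2.4.4}, only stripped of the diagonal-conjugation notation $D^{-1}A^{M+N}D$. The sole organizational difference is that the paper applies Frobenius (Theorem~\ref{thmF}) directly to $D^{-1}A^{M+N}D$, whereas you route the same step through Theorem~\ref{thmM1} with $L=M+N$; since Theorem~\ref{thmM1} is itself proved via that conjugation, the two arguments unwind to the same thing, and your equality discussion (collapsing the chain and invoking the equality clause of Theorem~\ref{thmM1}, then Lemma~\ref{lemEqEig} for the converse) matches the paper's use of the Frobenius equality condition plus Lemmas~\ref{irred-red} and~\ref{lemEqEig}.
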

\begin{proof}
Define the invertible diagonal matrix $D=\diag(r_1(A^k),\ldots,r_n(A^k))$.
Since $A^{M+N}$ and $D^{-1}A^{M+N} D$ are similar matrices, we have 
\begin{align}
\label{2.4.0}
\rho(A^{M+N}) &=  \rho(D^{-1}A^{M+N} D)\\ 
\label{2.4.1}
                & \le \max_{1\le i \le n}  r_i \left(D^{-1}A^{M+N} D\right) 
\end{align}
where the inequality in (\ref{2.4.1}) follows from Theorem~\ref{thmF}.
The row sums on the right hand side of (\ref{2.4.1}) can be formulated as
\begin{align}
\notag
r_i(D^{-1}A^M A^N D) &= r_i \left(\left(D^{-1}A^M D\right) \left(D^{-1} A^N D\right) \right) \\ 
\label{2.4.2}
         &=\sum_{j=1}^n \left(D^{-1}A^M D\right)_{ij} r_j \left(D^{-1}A^N D\right)\\
\label{2.4.3}
 	&\le   r_i \left(D^{-1}A^M D\right)  \max_{j} \left\{ r_j \left(D^{-1}A^N D\right) : a^{(M)}_{ij} > 0 \right\}\\ 
\label{2.4.4}
	&= \max_{j} \left\{ r_i \left(D^{-1}A^M D\right) r_j \left(D^{-1}A^N D\right) : a^{(M)}_{ij} > 0 \right\},
\end{align}
where (\ref{2.4.2}) follows from (\ref{genRS}).  The restricted maximizations in  (\ref{2.4.3}) and (\ref{2.4.4}) make use of the fact that the sparsity patterns of $A^M$ and $D^{-1}A^M D$ are the same.
Applying \eqref{usefulD} to the factors in the product in (\ref{2.4.4}), with $t=M$ and $t=N$, respectively, we conclude that
\begin{equation*}
\rho(A^{M+N}) \le
\max_{\substack{1\le i\le n\\1\le j\le n}} \left[ {\frac{r_i\left(A^{k+M}\right) r_j\left(A^{k+N}\right)}{r_i(A^k) \, r_j(A^k)}} : a_{ij}^{(M)}>0 \right].
\end{equation*}
Since $\rho(A^{M+N})=\rho(A)^{M+N}$, taking the $M+N$-th root of the inequality above yields the upper bound in \eqref{eqX1}.
The proof of the lower bound in \eqref{eqX1} is completely analogous.

We now show the equality condition, assuming that $A^{M+N}$ is irreducible. 
If equality holds in the upper bound of \eqref{eqX1}, then \eqref{2.4.1} must also hold with equality.
Then, by the equality condition of Theorem~\ref{thmF} applied to $D^{-1}A^{M+N} D$, we conclude that, in fact,
\begin{equation}
\label{2.4.7}
\rho(D^{-1}A^{M+N} D) =r_i \left(D^{-1}A^{M+N} D\right),
\end{equation}
for all $i\in\{1, \ldots, n\}$.
Referring to \eqref{usefulD}, we know that $r_i \left(D^{-1}A^{M+N} D\right)={(A^{M+N} x)_i}/{x_i}$, for all $i$,
where $x = (r_1(A^k), \ldots, r_n(A^k))^T$.
From \eqref{2.4.0} and \eqref{2.4.7}, we conclude that $\rho(A^{M+N}) ={(A^{M+N} x)_i}/{x_i}$, for all $i$.
This shows that $x$ is a positive eigenvector of $A^{M+N}$ corresponding to $\rho(A^{M+N})$. 
Since $A^{M+N}$ is irreducible, we may apply Lemma~\ref{irred-red} to show that $x$ is a positive eigenvector of $A$, as desired.
If equality holds in the lower bound of \eqref{eqX1}, the same conclusion is verified in an analogous manner.

Conversely, suppose that $x= (r_1(A^k), \ldots, r_n(A^k))^T$ is a positive eigenvector of $A$.
Then Lemma~\ref{lemEqEig} implies
\begin{equation*}
\label{2.4.10}
\frac{r_i(A^{k+t})}{r_i(A^k)} = \rho(A)^{t},
\end{equation*}
for all $i\in \{1, \ldots, n\}$ and $t\ge0$. 
This shows that both the upper and lower bounds in \eqref{eqX1} hold with equality. 
\end{proof}

\begin{rmk}
As was the case in Theorem~\ref{thmM1}, the nonzero row-sum assumption is not required for $k=0$.  
\end{rmk}

The result in \citet{Xu} was limited to the case where $k=M=N=1$ and, for just the lower bound, where $A$ is irreducible. 
Note that the upper (resp., lower) bound of Theorem~\ref{thmXuXuMat} degenerates to Theorem~\ref{thmM1} (with $L=M$)
when  $N=0$ or when $M=N$ and $a^{(M)}_{ii}>0$, where $i$ is the index of the row in A having the greatest (resp., least) row sum.
Since these new bounds and the bounds of Xu \& Xu depend upon the sparsity pattern of $A$, they may produce sharper bounds when critical entries of $A$ are zero.
However, from the proof it is clear that Theorem~\ref{thmM1} with $L=M+N$ is at least as tight as Theorem~\ref{thmXuXuMat}.
Nevertheless, in some applications it may be prohibitively complex to compute $r_i(A^{M+N+k})$ for Theorem~\ref{thmM1}
as opposed to examining the sparsity pattern of $A^M$ and computing $r_i(A^{M+k})$ and $r_i(A^{N+k})$ as required by Theorem~\ref{thmXuXuMat}.

Next, we briefly review bounds of a similar form developed by \citet[\textsection5]{Kolo}. 

\begin{thm}[Kolotilina]
\label{thmKolo}
Let $A=(a_{ij})$ be an $n \times n$ nonnegative matrix with spectral radius $\rho(A)$ and row sums $r_1(A),\ldots,r_n(A)$, all nonzero.  Then
\begin{equation*}
\min_{\substack{1\le i\le n\\1\le j\le n}}  \left[{r_i^\alpha(A) r_j^{1-\alpha}(A)} : a_{ij}>0 \right]
\le \rho(A) \le 
\max_{\substack{1\le i\le n\\1\le j\le n}} \left[{r_i^\alpha(A) r_j^{1-\alpha}(A)} : a_{ij}>0 \right]
\end{equation*}
for any $\alpha$ such that $0 \le \alpha \le 1$.
\end{thm}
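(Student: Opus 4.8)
The plan is to reuse the diagonal‑similarity idea from the proof of Theorem~\ref{thmXuXuMat}, but with a \emph{fractional} weighting of the row sums in place of powers of $A$. Since the row sums $r_i(A)$ are nonzero and $A\ge0$, each $r_i(A)>0$, so the vector $x=(r_1(A)^{1-\alpha},\ldots,r_n(A)^{1-\alpha})^T$ is positive, and Theorem~\ref{thmW} applies to the pair $A$, $x$.

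First I would compute the ratio appearing in \eqref{eqW}:
\[
\frac{(Ax)_i}{x_i}=\frac{1}{r_i(A)^{1-\alpha}}\sum_{j=1}^n a_{ij}\,r_j(A)^{1-\alpha}=r_i(A)^{\alpha}\cdot\frac{\sum_{j:\,a_{ij}>0}a_{ij}\,r_j(A)^{1-\alpha}}{r_i(A)},
\]
where in the last expression I used $r_i(A)=\sum_j a_{ij}$ to exhibit the fraction as a weighted average of the positive numbers $r_j(A)^{1-\alpha}$ taken over the indices $j$ with $a_{ij}>0$; this index set is nonempty exactly because $r_i(A)\ne0$. A convex combination lies between the smallest and the largest of the quantities averaged, so for every $i$,
\[
\min_{j:\,a_{ij}>0}\bigl[r_i(A)^{\alpha}r_j(A)^{1-\alpha}\bigr]\le\frac{(Ax)_i}{x_i}\le\max_{j:\,a_{ij}>0}\bigl[r_i(A)^{\alpha}r_j(A)^{1-\alpha}\bigr].
\]

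To finish, take the minimum (resp.\ maximum) over $i$ of the outer sides: the iterated extremum over $i$ and then over admissible $j$ is exactly the extremum of $r_i(A)^{\alpha}r_j(A)^{1-\alpha}$ over all ordered pairs $(i,j)$ with $a_{ij}>0$. Applying the two inequalities of Theorem~\ref{thmW}, namely $\min_i (Ax)_i/x_i\le\rho(A)\le\max_i (Ax)_i/x_i$, then yields the asserted bounds on $\rho(A)$.

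There is no genuinely hard step here; the only points that deserve a word of care are that the side condition $a_{ij}>0$ keeps the minimization/maximization sets nonempty for each $i$ (guaranteed by the nonzero‑row‑sum hypothesis), and the bookkeeping that passes from the per‑row estimates to the global pairwise extrema. One could instead phrase everything through Theorem~\ref{thmF} applied to $D^{-1}AD$ with $D=\diag(r_1(A)^{1-\alpha},\ldots,r_n(A)^{1-\alpha})$, exactly mirroring the earlier proofs; the row sums of $D^{-1}AD$ coincide with the ratios $(Ax)_i/x_i$ computed above. Finally, observe that the argument never uses $0\le\alpha\le1$: the convex‑combination bound is valid for any real exponent, so the same inequalities in fact hold for every real $\alpha$.
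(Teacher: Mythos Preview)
Your argument is correct. The choice $x=(r_1(A)^{1-\alpha},\ldots,r_n(A)^{1-\alpha})^T$ is positive under the nonzero row-sum hypothesis, the computation $(Ax)_i/x_i=r_i(A)^{\alpha}\cdot\bigl(\sum_j a_{ij}r_j(A)^{1-\alpha}/r_i(A)\bigr)$ is clean, and the weighted-average step does exactly what you claim. Your closing remark that the restriction $0\le\alpha\le1$ is never used is also correct.

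As for comparison: the paper does not supply its own proof of this theorem at all---it simply refers the reader to \citet[\textsection5]{Kolo}. So your write-up is more than the paper offers here. Your route is precisely the diagonal-similarity / Theorem~\ref{thmW} template that the paper uses repeatedly elsewhere (e.g., in Theorems~\ref{thmM1} and~\ref{thmXuXuMat}), which makes it a natural fit for this presentation; in fact, the paper's next displayed inequality \eqref{eqKoloSum} is obtained exactly by applying Theorem~\ref{thmKolo} to $D^{-1}A^{L}D$, so your proof dovetails with what follows. Kolotilina's original treatment is somewhat more general in scope, but for the statement as quoted your argument is complete and arguably the most economical one available within the paper's toolkit.
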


For the proof and the equality conditions of Theorem~\ref{thmKolo}, see \citet[\textsection5]{Kolo}.
Applying Theorem~\ref{thmKolo} to $D^{-1}A^L D$, where integer $L\ge1$ and matrix $D=\diag(r_1(A^k),\ldots,r_n(A^k))$, yields
\begin{equation}
\label{eqKoloSum}
\min_{\substack{1\le i\le n\\1\le j\le n}} \left[ \frac{r_i^\alpha(A^{k+L}) r_j^{1-\alpha}(A^{k+L})}{r_i^\alpha(A^k) r_j^{1-\alpha}(A^k)} : a_{ij}^{(L)}>0 \right]
\le \rho(A^L) \le 
\max_{\substack{1\le i\le n\\1\le j\le n}} \left[ \frac{r_i^\alpha(A^{k+L}) r_j^{1-\alpha}(A^{k+L})}{r_i^\alpha(A^k) r_j^{1-\alpha}(A^k)} : a_{ij}^{(L)}>0 \right].
\end{equation}
Note that \eqref{eqKoloSum} with $\alpha = 0.5$ is equivalent to \eqref{eqX1} with $L=M=N$.

The theorems and corollaries presented in this section have been structured around row sums. 
Since $\rho(A^T)=\rho(A)$, similar bounds may be obtained starting with column sums.

\section{Further Equality Conditions on the Spectral Radius Bounds} 
\label{sectEQ1}

In this section we develop alternative equality conditions for the spectral bounds of the previous section by generalizing the proofs in \citet{Zhang}.
We provide detailed proofs for the equality conditions in which the more general expression \eqref{eqX1} holds.
The corollaries of this section treat \eqref{eqM1} as a special case of \eqref{eqX1}.
Like Zhang \& Li, we divide the equality conditions for these bounds into two cases corresponding to whether $A^L$ is irreducible or reducible. 
We address the former first.

\begin{thm}
\label{thmX2}
Let $A$ be an $n \times n$ nonnegative matrix with spectral radius $\rho(A)$ and nonzero row sums, and 
let $M>0$, $N\ge 0$, and $k\ge0$ be integers.
If $A^{M+N}$ is an irreducible matrix, then equality holds on either side (and hence both sides) of \eqref{eqX1} if and only if
\begin{equation}
\label{eqM2}
\frac{r_i(A^{k+1})}{r_i(A^{k})}=\rho(A), 
\end{equation}
for all $i \in \left\{ 1,\ldots,n \right\}$. 
\end{thm}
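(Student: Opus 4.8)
The plan is to obtain Theorem~\ref{thmX2} directly from the equality characterization already proved in Theorem~\ref{thmXuXuMat}, which — under the standing hypothesis that $A^{M+N}$ is irreducible — says that equality in \eqref{eqX1} holds precisely when $x=(r_1(A^k),\ldots,r_n(A^k))^T$ is an eigenvector of $A$. So the only work left is to show that this eigenvector condition is the same as the scalar condition \eqref{eqM2}. Before starting I would record two facts used throughout: since the row sums of $A$, hence those of $A^k$, are nonzero and $A\ge 0$, the vector $x$ is strictly positive and $\rho(A)\ge\min_i r_i(A)>0$ by Theorem~\ref{thmF}; and since $A^{M+N}$ is irreducible, Lemma~\ref{irred-red} makes $A$ irreducible, so by Perron--Frobenius the only positive eigenvector of $A$ is the one belonging to $\rho(A)$.

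For the forward implication I would assume equality holds on one (hence both) sides of \eqref{eqX1}. Theorem~\ref{thmXuXuMat} then gives that $x$ is an eigenvector of $A$; being positive, and $A$ being irreducible, it must satisfy $Ax=\rho(A)x$. Now I would use the identity $(Ax)_i=\sum_{j=1}^n a_{ij}r_j(A^k)=r_i(A^{k+1})$, which is \eqref{usefulx} with $t=1$, together with $x_i=r_i(A^k)\neq 0$, to rewrite $Ax=\rho(A)x$ componentwise as $r_i(A^{k+1})/r_i(A^k)=\rho(A)$ for all $i$, i.e.\ exactly \eqref{eqM2}.

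For the converse I would start from \eqref{eqM2}, that is $r_i(A^{k+1})=\rho(A)\,r_i(A^k)$ for every $i$; by the same identity $(Ax)_i=r_i(A^{k+1})$ this reads $Ax=\rho(A)x$, so $x$ is an eigenvector of $A$ and Theorem~\ref{thmXuXuMat} returns equality in \eqref{eqX1}. If a more self-contained finish is wanted, \eqref{eqM2} says the vector $(r_1(A^{k+1}),\ldots,r_n(A^{k+1}))^T$ equals $\rho(A)x$, and iterating Lemma~\ref{lemEqEig} then yields $r_i(A^{k+t})/r_i(A^k)=\rho(A)^t$ for all $t\ge 0$ and all $i$, which makes every quantity inside the two extremizations in \eqref{eqX1} equal to $\rho(A)$, so both bounds are attained.

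I do not expect a serious obstacle: the statement is essentially a dictionary entry translating one eigenvector equation into a family of scalar ratios. The only point that needs care is making sure the positive eigenvector produced by Theorem~\ref{thmXuXuMat} is the Perron eigenvector belonging to $\rho(A)$ and not an eigenvector for some other eigenvalue — which is immediate from irreducibility of $A$ (Lemma~\ref{irred-red}) plus Perron--Frobenius. Should one prefer a proof that does not lean on Theorem~\ref{thmXuXuMat}, in the spirit of the generalization of Zhang~\&~Li announced at the start of the section, I would instead rerun that proof: for the forward direction, force the Frobenius upper bound applied to $D^{-1}A^{M+N}D$ to be attained and use the equality case of Theorem~\ref{thmF} to conclude that $x$ is the Perron vector of $A^{M+N}$, then pull this back to $A$ via Lemma~\ref{irred-red}; for the converse, use Lemma~\ref{lemEqEig} as above. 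Since Theorem~\ref{thmXuXuMat} is already available, the short route is cleaner.
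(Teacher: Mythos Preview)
Your proposal is correct and follows essentially the same route as the paper: invoke the equality condition of Theorem~\ref{thmXuXuMat} to identify $x=(r_1(A^k),\ldots,r_n(A^k))^T$ as an eigenvector of $A$, then use \eqref{usefulx} with $t=1$ to translate $Ax=\rho(A)x$ into \eqref{eqM2}, and run the argument in reverse for the converse. If anything, you are more explicit than the paper in justifying why the eigenvalue attached to the positive vector $x$ must be $\rho(A)$ (via Lemma~\ref{irred-red} and Perron--Frobenius), a step the paper's brief proof takes for granted.
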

\begin{proof}
By the equality condition of Theorem~\ref{thmXuXuMat}, equality holds in \eqref{eqX1} 
if and only if $x=(r_1(A^k),\ldots,r_n(A^k))^T$ is an eigenvector of $A$.
Using \eqref{usefulx}, we may restate $(A x)_i = \rho(A) x_i$ as \eqref{eqM2}, for all $i \in \left\{ 1,\ldots,n \right\}$.
The converse follows using the prior argument in reverse. 

\end{proof}

\begin{cor}
\label{corM4}
Theorem~\ref{thmX2} also describes equality in \eqref{eqM1} when $A^{L}$ is irreducible for some $L>0$.
\end{cor}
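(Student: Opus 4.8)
The plan is to derive the corollary by recognizing inequality \eqref{eqM1} as the special case $M=L$, $N=0$ of inequality \eqref{eqX1}, so that the equality characterization of Theorem~\ref{thmX2} applies to it verbatim. First I would set $N=0$ in \eqref{eqX1}: the factor $r_j(A^{k+N})/r_j(A^k)$ collapses to $1$, the exponent $1/(M+N)$ becomes $1/M$, and the bracketed quantity $\left(r_i(A^{k+M})/r_i(A^k)\right)^{1/M}$ no longer depends on $j$. Hence the only role of the constraint $a_{ij}^{(M)}>0$ is to certify that \emph{some} such $j$ exists for a given $i$, that is, that $r_i(A^M)>0$. Since $A^L$ irreducible forbids $A$ from having a zero row (a zero row of $A$ would propagate to a zero row of $A^L$, contradicting irreducibility when $n\ge2$, the case $n=1$ being trivial), $A$ has nonzero row sums, and therefore so does every power $A^M$ by the remark of Liu recalled earlier. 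Thus no row index is discarded, the double extremum over the admissible pairs $(i,j)$ reduces to the single extremum over $i$, and \eqref{eqX1} with $(M,N)=(L,0)$ coincides with \eqref{eqM1}.

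Next I would observe that the hypothesis of Theorem~\ref{thmX2} that $A^{M+N}$ be irreducible reads, under $(M,N)=(L,0)$, precisely as the hypothesis ``$A^L$ irreducible'' of the corollary, while the remaining hypotheses ($M>0$, $N\ge0$, $k\ge0$ integers) are satisfied. Because \eqref{eqM1} and the $N=0$ instance of \eqref{eqX1} are the same chain of inequalities, equality on a given side of one is equality on the same side of the other. Applying Theorem~\ref{thmX2} then yields that equality holds on either (and hence both) side(s) of \eqref{eqM1} if and only if $r_i(A^{k+1})/r_i(A^k)=\rho(A)$ for all $i\in\{1,\ldots,n\}$, which is exactly the assertion of the corollary.

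I do not anticipate any genuine obstacle here; the one point that warrants a careful line is confirming that the sparsity constraint in \eqref{eqX1} does not silently omit a row index once $N=0$, which is precisely where the nonzero-row-sum property inherited from the irreducibility of $A^L$ enters. As an alternative route one could bypass \eqref{eqX1} entirely: Theorem~\ref{thmM1} already states that equality in \eqref{eqM1} holds if and only if $x=(r_1(A^k),\ldots,r_n(A^k))^T$ is an eigenvector of $A$, and identity \eqref{usefulx} with $t=1$ rewrites $Ax=\rho(A)x$ coordinatewise as \eqref{eqM2}; however, routing through Theorem~\ref{thmX2} keeps the corollary the one-line consequence anticipated by the remark following Theorem~\ref{thmXuXuMat}.
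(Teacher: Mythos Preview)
Your proposal is correct and follows the same route as the paper: specialize Theorem~\ref{thmX2} to $N=0$, $M=L$, so that \eqref{eqX1} collapses to \eqref{eqM1} and the irreducibility hypothesis matches. The paper's proof is the single line ``The proof follows analogously, letting $N=0$ and $L=M$,'' so your version is simply a more detailed unpacking of the same specialization; the extra care you take with the sparsity constraint is sound but not strictly needed, since the nonzero-row-sum hypothesis is already part of Theorem~\ref{thmM1} (and of Theorem~\ref{thmX2}) rather than something to be recovered from irreducibility of $A^L$.
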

\begin{proof}
The proof follows analogously, letting $N=0$ and $L=M$.
\end{proof}

The case in which $A^L$ is reducible requires some background concerning imprimitive matrices, which we review next.
A nonnegative irreducible matrix $A$ having only one eigenvalue with a modulus equal to $\rho(A)$ is said to be \emph{primitive}. 
If a nonnegative irreducible matrix $A$ has $h > 1$ eigenvalues with modulus $\rho(A)$, it is said to be \emph{imprimitive} 
or a cyclic matrix, and $h$ is known as the \emph{index of imprimitivity}.

\begin{lem}
\label{lemsuperdiag}
Let $A$ be an $n \times n$ irreducible nonnegative matrix with index of imprimitivity equal to $h$.
Let $L>0$ be an integer and $r$ be the greatest common divisor (gcd) of $h$ and $L$.
Then $A^L$ is reducible if and only if $r>1$.
In general there is a permutation matrix $P$ that symmetrically permutes $A^L$ to the block diagonal matrix 
\begin{equation}
\label{reducdiag}
P A^L P^T =\begin{pmatrix} 
C_{1}&0&\cdots&0\\
0&C_{2}&\cdots&0\\
\vdots&\vdots&\ddots&\vdots\\
0&0&\cdots&C_{r} \end{pmatrix},
\end{equation}
where each $C_\ell$ matrix is an $n_\ell \times n_\ell$ irreducible nonnegative matrix.
Furthermore, for $r>1$, $P$ also symmetrically permutes $A$ to form
\begin{equation}
\label{superdiag}
P A P^T =\begin{pmatrix} 0&A_{12}&0&\cdots&0\\
0&0&A_{23}&\cdots&0\\
\vdots&\vdots&\vdots&\ddots&\vdots\\
0&0&0&\cdots&A_{r-1,r}\\
A_{r,1}&0&0&\cdots&0 \end{pmatrix},
\end{equation}
where the all-zero submatrices along the diagonal are square and of order $n_1,\ldots,n_r$, respectively.
When \eqref{superdiag} holds with $r>1$, we say that $A$ is \emph{$r$-cyclic}.
The {block} (\textit{i.e.}, submatrix) $A_{\ell,m}$ is $n_\ell \times n_m$, for all $\ell \in\{1,\ldots,r\}$ and $m=(\ell \bmod r)+1$.
Moreover,
\begin{equation}
\label{Cs_def}
\begin{split}
C_1 &=\left[ A_{12}A_{23}\cdots A_{r-1,r}A_{r,1}\right]^{(L/r)}\\
C_2 &=\left[ A_{23}A_{34}\cdots A_{r,1}A_{12}\right]^{(L/r)}\\
\ldots \\
C_r &=\left[ A_{r,1}A_{12}\cdots A_{r-2,r-1}A_{r-,1r}\right]^{(L/r)},
\end{split}
\end{equation}
and $\rho(A^L)=\rho(C_1)=\cdots =\rho(C_r)$.
\end{lem}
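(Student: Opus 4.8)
The plan is to establish \eqref{reducdiag} and \eqref{superdiag} using the classical Frobenius normal-form theory for imprimitive matrices, and then to verify \eqref{Cs_def} and the equality of spectral radii by direct computation. First I would invoke the standard structure theorem for an irreducible imprimitive matrix $A$ of index $h$: there is a permutation matrix $Q$ so that $QAQ^T$ has the cyclic block form of \eqref{superdiag} but with $h$ diagonal zero blocks rather than $r$ (this is the canonical $h$-cyclic form). From this form one computes $QA^hQ^T$ and finds it block-diagonal with $h$ irreducible primitive blocks on the diagonal, and more generally $QA^LQ^T$ is a permuted block-diagonal matrix whose block structure is governed by the cyclic group $\mathbb{Z}/h\mathbb{Z}$: the $(i,j)$ block of $A^L$ is nonzero only when $j\equiv i+L\pmod h$, so the nonzero blocks form cycles whose lengths are all $h/r$ and whose number is exactly $r=\gcd(h,L)$. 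Collecting the indices in each cycle and applying a further permutation yields \eqref{reducdiag} with $r$ diagonal blocks $C_1,\dots,C_r$, each irreducible (irreducibility of each $C_\ell$ follows because the corresponding index-cycle, read through the underlying digraph, is strongly connected — a consequence of $A$ being irreducible). In particular $A^L$ is reducible iff $r>1$, giving the first assertion.

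Next, to obtain \eqref{superdiag} itself, I would group the $h$ classes of the canonical $h$-cyclic partition into $r$ super-classes by taking residues mod $r$ (equivalently, by reading the $h$-cycle $r$ steps at a time is not quite right — rather, merge class $s$ with class $s+r, s+2r,\dots$). Relabeling so that consecutive super-classes are adjacent produces the $r$-cyclic form \eqref{superdiag}, where the off-diagonal block $A_{\ell,m}$ (with $m=(\ell\bmod r)+1$) is itself a product of the original $h$-cyclic blocks, and the diagonal zero blocks have the orders $n_1,\dots,n_r$ of the merged classes. The permutation $P$ achieving \eqref{superdiag} is, by construction, the same one that achieves \eqref{reducdiag}: traversing the $r$-cycle $L$ times returns to the starting super-class and lands back in the same class mod $h$ exactly because $r\mid L$, so the diagonal blocks of $PA^LP^T$ are indexed by the $r$ super-classes. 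Computing $(PAP^T)^L$ from \eqref{superdiag} then gives \eqref{Cs_def}: going around the $r$-cycle once contributes the product $A_{12}A_{23}\cdots A_{r,1}$, and since $L/r$ full loops are needed to return, $C_1=[A_{12}A_{23}\cdots A_{r,1}]^{(L/r)}$, with the analogous cyclic shifts for $C_2,\dots,C_r$ according to which super-class one starts in.

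Finally, $\rho(A^L)=\rho(C_1)=\cdots=\rho(C_r)$ follows because $\rho$ of a block-diagonal matrix is the maximum of the $\rho$ of its blocks, so $\rho(A^L)=\max_\ell\rho(C_\ell)$; but the $C_\ell$ are cyclic conjugates — $C_2$ is $[A_{23}\cdots A_{r,1}A_{12}]^{(L/r)}$, i.e.\ $A_{12}^{-1}$-like conjugation relates them, or more cleanly, for square-block products $XY$ and $YX$ one has $\sigma(XY)\setminus\{0\}=\sigma(YX)\setminus\{0\}$, and iterating this around the loop shows all the $C_\ell$ share the same nonzero spectrum, hence the same spectral radius (each $C_\ell$ is irreducible, so $\rho(C_\ell)>0$ and is attained on the nonzero spectrum). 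This gives $\rho(A^L)=\rho(C_1)=\cdots=\rho(C_r)$.

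The main obstacle I expect is bookkeeping rather than conceptual: carefully tracking how the $h$ canonical classes regroup into $r$ super-classes and verifying that the \emph{same} permutation $P$ simultaneously puts $A$ into the $r$-cyclic form \eqref{superdiag} and $A^L$ into the block-diagonal form \eqref{reducdiag} with matching block orders $n_1,\dots,n_r$. One must also be careful that each merged block is genuinely irreducible and that the cyclic-conjugacy argument for equal spectral radii is applied to the $r$ blocks in the right rotational order; these points are where a slip is most likely, so I would state the index arithmetic mod $h$ explicitly and appeal to the standard Perron–Frobenius normal-form results (e.g.\ in \citet{Minc} or \citet[Ch.~8]{Horn}) for the existence of the canonical $h$-cyclic partition.
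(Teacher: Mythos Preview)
The paper does not actually prove this lemma: its entire proof reads ``See \citet[\textsection3.4]{Brualdi1991}.'' So there is no in-paper argument to compare against; the authors simply import the result as standard Perron--Frobenius structure theory.

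Your proposal is a faithful reconstruction of precisely that standard argument, and it is correct. Starting from the canonical $h$-cyclic form, merging the $h$ classes into $r$ super-classes via residues modulo $r$, and then reading off both the $r$-cyclic form of $A$ and the block-diagonal form of $A^L$ is exactly how the Brualdi--Ryser development proceeds. Your observation that $r\mid L$ forces $A^L$ to preserve each super-class (hence block-diagonal), while $A$ itself shifts super-classes by one (hence $r$-cyclic), is the right mechanism, and your cyclic-shift argument $\sigma(XY)\setminus\{0\}=\sigma(YX)\setminus\{0\}$ for the equality of the $\rho(C_\ell)$ is clean and standard. The only point you flag as delicate---that the \emph{same} permutation $P$ works for both \eqref{superdiag} and \eqref{reducdiag}---is indeed just bookkeeping and falls out automatically once you fix the super-class grouping, since \eqref{reducdiag} is obtained by raising \eqref{superdiag} to the $L$th power. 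In short, your sketch is more than the paper itself offers, and it matches the cited source.
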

\begin{proof}
See \citet[\textsection3.4]{Brualdi1991}.
\end{proof}
\begin{rmk}
Note that for a given matrix, its $r$ value may vary depending on the specified value of $L$, since $r=\gcd(h,L)$.
\end{rmk}

Recall from Perron-Frobenius theory that a square nonnegative matrix $A$, even a reducible one, has at least one nonnegative eigenvector $x\ne0$, 
such that $A x = \rho(A) x$  \cite[p. 503]{Horn}.
However, some reducible matrices, in fact, have a positive eigenvector and it need not be unique. 
In the case of the reducible matrix $A^L$ of Lemma~\ref{lemsuperdiag}, having eigenvalue $\rho(A^L)$ with an algebraic multiplicity equal to $r$, 
there are $r$ linearly independent positive eigenvectors of $A^L$ corresponding to $\rho(A^L)$, which we utilize to prove the next theorem.

\begin{thm}
\label{thmX3}
Let $A$ be an $n \times n$ irreducible nonnegative matrix with spectral radius $\rho(A)$, index of imprimitivity equal to $h$, and nonzero row sums.
Let $M>0$, $N\ge 0$, and $k\ge 0$ be integers and $r=\gcd(h,M+N)$.
If $A^{M+N}$ is reducible ($r>1$), then equality holds on either side (and hence both sides) of \eqref{eqX1}  if and only if 
\begin{equation}
\label{eqM4}
\frac{r_i(A^{k+1})}{r_i(A^{k})} =c_{m(i)},
\end{equation}
for all $i \in \{ 1,\ldots,n \}$,
where the $i$th row of $A$ has been assigned to the $\ell$th block, $\ell \in \{1,\ldots,r\}$, using the mapping $\ell=m(i)$ according to \eqref{reducdiag} and \eqref{superdiag},
and $c_\ell$ is a constant for the $\ell$th block.
Moreover, $\rho(A)^r = \prod_{\ell=1}^r c_\ell$.
\end{thm}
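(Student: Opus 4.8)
The plan is to reduce to the irreducible-block picture supplied by Lemma~\ref{lemsuperdiag}. Since $r = \gcd(h, M+N) > 1$, there is a permutation $P$ so that $PA^{M+N}P^T = \diag(C_1, \ldots, C_r)$ with each $C_\ell$ irreducible, and simultaneously $PAP^T$ has the $r$-cyclic block form \eqref{superdiag} with diagonal zero blocks of orders $n_1, \ldots, n_r$. Throughout I will work in the permuted coordinates (this is harmless since row sums and the spectral radius are permutation invariant), and write $m(i) \in \{1,\ldots,r\}$ for the block index of row $i$. The key observation is that $\rho(A^{M+N}) = \rho(C_\ell)$ for every $\ell$, and that the vector $x = (r_1(A^k), \ldots, r_n(A^k))^T$ restricted to the indices of block $\ell$ — call it $x^{(\ell)}$ — interacts with the inequality chain \eqref{2.4.0}--\eqref{2.4.4} in the proof of Theorem~\ref{thmXuXuMat} block-by-block, because $a_{ij}^{(M)} > 0$ forces $i$ and $j$ to lie in blocks that are $M$ apart in the cyclic order, hence in the \emph{same} $C$-block of $A^{M+N}$ after the $N$ further steps.

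First I would establish the forward direction. Suppose equality holds in (say) the upper bound of \eqref{eqX1}. Retracing the proof of Theorem~\ref{thmXuXuMat}, equality forces \eqref{2.4.1} to be an equality, i.e. $r_i(D^{-1}A^{M+N}D) = \rho(A^{M+N})$ for every $i$, which says $A^{M+N} x = \rho(A^{M+N}) x = \rho(A)^{M+N} x$. Reading this block-by-block, each restriction $x^{(\ell)}$ is a positive eigenvector of the irreducible matrix $C_\ell$ for its Perron root $\rho(C_\ell) = \rho(A)^{M+N}$; by uniqueness of the Perron eigenvector (Perron--Frobenius), $x^{(\ell)}$ is \emph{the} positive eigenvector of $C_\ell$. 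Now I use the $r$-cyclic structure of $A$: writing $x_\ell := x^{(\ell)}$, the relation $A^{M+N}x = \rho(A)^{M+N}x$ together with $PAP^T$ sending block $\ell$ into block $m = (\ell \bmod r)+1$ shows $A_{\ell, m} x_m$ is proportional to $x_\ell$. Indeed, the product $A_{\ell,m} x_m$ is a positive vector, and applying the remaining cyclic factors to it reproduces (a scalar multiple of) $C_\ell x_\ell$; by the uniqueness just invoked, $A_{\ell,m} x_m = c_\ell x_\ell$ for a positive constant $c_\ell$. But $(A x)_i = r_i(A^{k+1})$ by \eqref{usefulx}, and for $i$ in block $\ell$ the $i$th entry of $Ax$ is exactly the $i$th entry of $A_{\ell,m} x_m = c_\ell x_\ell$; hence $r_i(A^{k+1}) = c_{m(i)} \, r_i(A^k)$, which is \eqref{eqM4}. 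Finally, composing the $r$ cyclic relations $A_{\ell,m}x_m = c_\ell x_\ell$ around the full cycle gives $C_1 x_1 = (\prod_{\ell=1}^r c_\ell)\, x_1$, so $\prod_{\ell=1}^r c_\ell = \rho(C_1) = \rho(A)^r$, as claimed. The lower-bound case is identical with maxima replaced by minima.

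For the converse, assume \eqref{eqM4} holds with block constants $c_\ell$ and $\prod_\ell c_\ell = \rho(A)^r$. Then by \eqref{genRS}-type telescoping, iterating \eqref{eqM4} gives $r_i(A^{k+t})/r_i(A^k) = \prod$ of $t$ consecutive $c$'s starting at block $m(i)$; in particular taking $t = M+N$, which is a multiple of $r$, the product of those $M+N$ consecutive constants equals $(\prod_{\ell=1}^r c_\ell)^{(M+N)/r} = \rho(A)^{M+N}$, independent of $i$. Hence every ratio $r_i(A^{k+M})r_j(A^{k+N})/(r_i(A^k)r_j(A^k))$ appearing in \eqref{eqX1} — note the constraint $a_{ij}^{(M)} > 0$ ties $m(j)$ to $m(i)$ shifted by $M$, so the two telescoping products concatenate into $M+N$ consecutive constants — equals $\rho(A)^{M+N}$ exactly, forcing equality in both bounds.

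The main obstacle I anticipate is the bookkeeping in the converse: one must verify that for every admissible pair $(i,j)$ with $a_{ij}^{(M)} > 0$ the two partial products of $c$'s — one of length $M$ from block $m(i)$, one of length $N$ from block $m(j)$ — splice together to exactly $M+N$ consecutive terms around the $r$-cycle, so that their product collapses to $\rho(A)^{M+N}$ using only $\prod_\ell c_\ell = \rho(A)^r$ and $r \mid (M+N)$. Getting the index shifts right (block $m(i)$ advances by $M$ under $A^M$, and $a_{ij}^{(M)}>0$ means $j$ sits in that advanced block) is where care is needed; once the cyclic arithmetic is set up cleanly the rest is routine. A secondary point requiring a careful statement is the uniqueness argument identifying $x^{(\ell)}$ with the Perron eigenvector of $C_\ell$ and then propagating it through the cyclic factors $A_{\ell,m}$ — this is exactly the imprimitive analogue of Lemma~\ref{irred-red}, and I would isolate it as the technical heart of the forward direction.
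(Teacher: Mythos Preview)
Your proposal is correct and follows the paper's architecture closely: reduce to the block form of Lemma~\ref{lemsuperdiag}, show in the forward direction that $x=(r_1(A^k),\ldots,r_n(A^k))^T$ is a positive eigenvector of $A^{M+N}$ block by block, deduce the blockwise proportionality $A_{\ell,m}x_m=c_\ell x_\ell$, and in the converse telescope \eqref{eqM4} through the cycle so that the $M$- and $N$-products splice into $(M+N)/r$ full cycles.

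The only substantive difference is in how you obtain $A_{\ell,m}x_m\propto x_\ell$. The paper introduces the Perron eigenvector $y_1$ of $A$ itself, observes that it must lie in the set $\set{X}=\{(g_1w_1^T,\ldots,g_rw_r^T)^T:g_\ell\in\R^+\}$ of positive eigenvectors of $A^{M+N}$, and then reads off $c_\ell=\rho(A)\,g_\ell/g_{\ell+1}$ directly from $Ay_1=\rho(A)y_1$ (this is \eqref{eqM7}). You instead argue via the cyclic intertwining---your ``remaining cyclic factors''---which amounts to $C_\ell(A_{\ell,m}x_m)=A_{\ell,m}(C_m x_m)=\rho(A)^{M+N}A_{\ell,m}x_m$, forcing the positive vector $A_{\ell,m}x_m$ to be the Perron vector of $C_\ell$. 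Both routes are valid; the paper's makes the constants $c_\ell$ explicit in terms of $\rho(A)$ and the $g_\ell$, while yours is slightly more self-contained but needs the intertwining relation stated carefully (and note that one trip around the cycle gives $A_{12}\cdots A_{r,1}$, not $C_1$ itself, so your product identity yields $\prod_\ell c_\ell$ as the Perron root of the single-cycle product, from which $\rho(A)^r=\prod_\ell c_\ell$ follows after raising to the $(M+N)/r$ power).

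Two small points to tidy. First, your claim that equality in \eqref{2.4.1} gives $r_i(D^{-1}A^{M+N}D)=\rho(A^{M+N})$ \emph{for every} $i$ cannot invoke the Frobenius equality condition directly, since $A^{M+N}$ is reducible here; you must pass through the irreducible blocks first, exactly as the paper does in \eqref{eqM6}. Second, in your converse you \emph{assume} $\prod_\ell c_\ell=\rho(A)^r$, but the theorem asserts this as a consequence: once you show that every admissible ratio in \eqref{eqX1} equals $(\prod_\ell c_\ell)^{(M+N)/r}$ independently of $i$, the sandwich $\min=\max$ forces that common value to be $\rho(A)^{M+N}$, and the product identity follows.
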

\begin{proof}
Let $x=(r_1(A^k),\ldots,r_n(A^k))^T$.
From the proof of Theorem~\ref{thmXuXuMat} we know that equality on the right side of \eqref{eqX1} implies 
\begin{equation*}
\rho(A^{M+N})= \max_{1\le i \le n} \left[\frac{(A^{M+N} x)_i}{x_i} \right].
\end{equation*}
Without loss of generality we will assume that $A$ is in the form of \eqref{superdiag} and $A^{M+N}$ is in block diagonal form \eqref{reducdiag}, where
each $C_\ell$ is $n_\ell \times n_\ell$ and irreducible.
Let $x$ be divided into $r$ subvectors, such that $x=(w_1^T,\ldots,w_r^T)^T$ and the $\ell$th subvector $w_\ell$ has $n_\ell$ elements.
Assuming that the $i$th entry of $x$ lies in the $\ell$th subvector $w_\ell$, where $\ell=m(i)$, we may form the general relation
\begin{equation*}
\frac{\bigl(A^{M+N} x\bigr)_i}{x_i} = 
\frac{\bigl(C_\ell w_\ell\bigr)_{j}}{(w_\ell)_{j}},
\end{equation*}
where $j=i - \sum_{t=1}^{\ell-1} n_t$ is the index within the $\ell$th subvector.
We may use Theorem~\ref{thmW} to bound the spectral radius of each $C_\ell$ as
\begin{equation}
\label{eqM6}
\rho(C_\ell) \le
\max_{1\le j \le n_\ell} \left[\frac{\bigl(C_\ell w_\ell\bigr)_{j}}{(w_\ell)_{j}} \right] \le
\max_{\begin{subarray}{l} 1\le j\le n_t\\
                       1\le t\le r\end{subarray} } \left[\frac{\bigl(C_t w_t\bigr)_{j}}{(w_t)_{j}} \right] = \rho(A^{M+N}),
\end{equation}
for all $\ell\in \{1,\ldots,r\}$.  
However, $\rho(A^{M+N}) = \rho(C_\ell)$ implies that equality holds throughout \eqref{eqM6} for all $\ell$.
Since $C_\ell$ is irreducible, we apply the equality condition of Theorem~\ref{thmW} to $C_\ell$ 
yielding $w_\ell$ as an eigenvector of $C_\ell$ associated with $\rho(C_\ell)$, for all $\ell\in \{1,\ldots,r\}$.
Therefore, $x$ is a positive eigenvector of $A^{M+N}$.

Now, we can show that 
\begin{equation*}
\set{X} = \left\{ (g_1 w_1^T, \ldots, g_r w_r^T)^T : g_1, \ldots, g_r \in \R^+ \right\},
\end{equation*}
is the complete set of positive eigenvectors of $A^{M+N}$, where $\R^+$ is the set of real, positive numbers.
If we evaluate $y$ in $A^{M+N} y = \rho(A^{M+N}) y$, where $y \in \set{X}$, it readily reduces to $g_\ell C_\ell w_\ell = \rho(A^{M+N}) g_\ell w_\ell$ or simply
$C_\ell w_\ell = \rho(A^{M+N}) w_\ell$, for all $\ell \in \{1,\ldots,r\}$ due to the block diagonal form of $A^{M+N}$.
Since $C_\ell$ is irreducible, $w_\ell$ is the unique positive eigenvector of $C_\ell$, up to a positive scale factor, for all $\ell \in \{1,\ldots,r\}$.
Hence, there can be no other positive eigenvectors of $A^{M+N}$ beyond set $\set{X}$.

Since $A$ is irreducible it must have a unique (up to a scale factor) positive eigenvector, we shall call $y_1$.
Additionally, $y_1$ is in the set $\set{X}$, because every eigenvector of $A$ must also be an eigenvector of $A^{M+N}$ and
$\set{X}$ is the entire set of positive eigenvectors of $A^{M+N}$.
Thus, we may simplify the ratio
\begin{equation}
\label{eqM7}
\frac{r_{i}(A^{k+1})}{r_{i}(A^{k})} = 
\frac{\left(A x\right)_{i}}{x_{i}} = 
\frac{\left(A_{12} w_2\right)_i}{\left(w_1\right)_i} = 
\frac{\rho(A) g_1 \left(w_1\right)_i}{g_2 \left(w_1\right)_i} = 
\rho(A) \frac{g_1}{g_{2}},
\end{equation}
when $1\le i \le n_1$,
where we take $g_1,\ldots,g_r$ to be real, positive constants that depend upon $y_1$.
Note that \eqref{eqM7} is constant within the first block.
The other blocks follow similarly, confirming \eqref{eqM4}.  
Starting with the equality on the left side of \eqref{eqX1} yields the same result.

Conversely, suppose that \eqref{eqM4} is true for all $i \in \{ 1,\ldots,n \}$. 
Then, for the first block ($\ell=1$), using \eqref{rowsum_order_k}, \eqref{usefulx}, and \eqref{eqM4},
\begin{equation*}
\frac{r_{i}(A^{k+L})}{r_{i}(A^{k})} = 
\frac{\sum_{j=1}^n a_{ij}^{(L-1)} r_j(A^{k+1})}{r_i(A^k)}=
\frac{c_1 \sum_{j=1}^n a_{ij}^{(L-1)} r_j(A^k)}{r_i(A^k)}=\cdots =
\frac{c_1 c_2 \cdots c_L \, r_i(A^k)}{r_i(A^k)} = c_1 c_2 \cdots c_L,
\end{equation*}
for all $1\le i \le n_1$ and $1\le L \le r$.
For values of $L$ greater than $r$ the indexing of $c_\ell$ must wrap around to $1$.
Thus, for the $i$th row, which is in block $m(i)$, we may form
\begin{align}
\label{eqX2}
\frac{r_{i}(A^{k+M})}{r_{i}(A^{k})} &= \prod_{\ell=m(i)}^{m(i)+M-1} c_{[(\ell-1)\bmod r]+1}
&\mbox{ and}&&  
\frac{r_{i}(A^{k+N})}{r_{i}(A^{k})} &= \prod_{\ell=m(i)}^{m(i)+N-1} c_{[(\ell-1)\bmod r]+1},
\end{align}
for all $i \in \{1,\ldots,n\}$, where we have performed modulo arithmetic on the indexing of $c_\ell$ terms, as required.
Recognize that the values of \eqref{eqX2} are still dependent on the row index $i$.  
Next, we limit our consideration of \eqref{eqX2} to rows $i$ and $j$, respectively, such that $a_{ij}^{(M)}>0$ as in \eqref{eqX1}.
Thus, row $j$ is in block $m(j)$, where $m(j)=[m(i)+M-1)\bmod r]+1$.
Therefore, forming the product of the row sum ratios in \eqref{eqX2}, with that restriction, yields
\begin{equation}
\label{eqX3}
\frac{r_{i}(A^{k+M})}{r_{i}(A^{k})} \frac{r_{j}(A^{k+N})}{r_{j}(A^{k})} =
\prod_{\ell=m(i)}^{m(i)+M-1} c_{[(\ell-1)\bmod r]+1}  
\prod_{t=m(i)+M}^{m(i)+M+N-1} c_{[(t-1)\bmod r]+1} =
\left( \prod_{\ell=1}^{r} c_{\ell} \right)^{(M+N)/r}\mspace{-20.0mu},
\end{equation}
for all $i \in \{1,\ldots,n\}$ and all $j$ such that $a_{ij}^{(M)}>0$.
That is, the product of constants in \eqref{eqX3} includes exactly $(M+N)/r$ occurrences of each constant $c_\ell$.
This results in a value for \eqref{eqX3} that is independent of $i$ and $m(i)$ and, hence, equality is true on both sides of \eqref{eqX1}
with $\rho(A)^r = \prod_{\ell=1}^r c_\ell$.

\end{proof}

\begin{cor}
\label{corM4a}
Theorem~\ref{thmX3} also describes equality in \eqref{eqM1} when $A$ is irreducible and $A^{L}$ is reducible for some $L>0$.
\end{cor}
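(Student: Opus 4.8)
The plan is to mirror the passage ``The proof follows analogously, letting $N=0$ and $L=M$'' that was already used for Corollary~\ref{corM4}, now applied to Theorem~\ref{thmX3} in place of Theorem~\ref{thmX2}. First I would note that, by the Remark following Theorem~\ref{thmXuXuMat} (and the observations following its proof), the bound \eqref{eqM1} is literally the specialization of \eqref{eqX1} obtained by setting $N=0$: in that case $r_j(A^{k+N})/r_j(A^k)=1$ for every $j$, the constraint $a_{ij}^{(M)}>0$ becomes vacuous in the sense that for each $i$ there is always some admissible $j$ (every row sum is nonzero so row $i$ of $A^M$ is nonzero), and the exponent $1/(M+N)$ becomes $1/M$. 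Writing $L=M$ then turns the two sides of \eqref{eqX1} into the two sides of \eqref{eqM1}.

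Next I would address the hypotheses. Theorem~\ref{thmX3} assumes $A^{M+N}$ is reducible with $r=\gcd(h,M+N)>1$; under the substitution $N=0$, $L=M=M+N$, this is exactly the hypothesis of the corollary, namely $A$ irreducible and $A^L$ reducible, and by Lemma~\ref{lemsuperdiag} reducibility of $A^L$ is equivalent to $r=\gcd(h,L)>1$, so the same block structures \eqref{reducdiag} and \eqref{superdiag} and the same partition $m(\cdot)$ of the rows into $r$ blocks are available. The nonzero-row-sum requirement carries over verbatim, and (as remarked after Theorems~\ref{thmM1} and \ref{thmXuXuMat}) is not needed when $k=0$. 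Thus the conclusion of Theorem~\ref{thmX3} — that equality holds in \eqref{eqX1} iff \eqref{eqM4} holds, with $\rho(A)^r=\prod_{\ell=1}^r c_\ell$ — transfers directly to a statement about equality in \eqref{eqM1}.

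The one point that deserves a sentence of care is the \emph{converse} direction of Theorem~\ref{thmX3}, since its proof used \eqref{eqX3}, which combined the $M$-factor product and the $N$-factor product of the $c_\ell$'s into $(M+N)/r$ copies of each constant. With $N=0$ the second product in \eqref{eqX2} and \eqref{eqX3} is empty, so I would instead invoke the first display of that converse computation directly: for $i$ in block $m(i)$ one gets $r_i(A^{k+L})/r_i(A^k)=\prod_{\ell=m(i)}^{m(i)+L-1} c_{[(\ell-1)\bmod r]+1}$, and since $r\mid L$ this product contains exactly $L/r$ copies of each $c_\ell$, hence equals $\bigl(\prod_{\ell=1}^r c_\ell\bigr)^{L/r}=\rho(A)^L$ independently of $i$. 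That is precisely equality on both sides of \eqref{eqM1}. So the only ``obstacle'' is bookkeeping: checking that the $N=0$ degeneracy does not void any step of Theorem~\ref{thmX3}'s proof, and it does not. I would therefore write the proof of Corollary~\ref{corM4a} in a single sentence, exactly as Corollary~\ref{corM4} was written: the proof follows analogously, letting $N=0$ and $L=M$.
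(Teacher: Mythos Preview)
Your proposal is correct and is exactly the approach the paper takes: the paper's proof of Corollary~\ref{corM4a} is the single sentence ``The proof follows analogously, letting $N=0$ and $L=M$,'' and your write-up simply expands the bookkeeping behind that sentence. Your extra care with the converse direction (observing that the second product in \eqref{eqX3} becomes empty and invoking the first display with $r\mid L$) is a correct and useful elaboration, but it does not diverge from the paper's route.
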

\begin{proof}
The proof follows analogously, letting $N=0$ and $L=M$.
\end{proof}

\section{Digraphs and Sinks} 
\label{sectDigraph}
In this section, we cast the results of previous sections in graph-theoretic terms.  We first review some basic concepts and terminology related to digraphs. (For a more complete treatment, we refer the reader to  \citet[\textsection4.3]{Minc} and \citet[chap.~3]{Brualdi1991}.)
Let $G = (V, E)$ be a directed graph or \emph{digraph} with a nonempty set of $n$ vertices, $V = \{v_1, . . . , v_n\}$, 
and a collection $E$ of directed edges or \emph{arcs}.  
The digraph is called \emph{simple} if it contains no self-loops (arcs with the same initial and terminal vertex) or multiarcs (arcs 
that share the same initial and terminal vertices).  
We remark that, in contrast to the results of \citet{Zhang}, \citet{Xu}, and \citet{Gungor} that we generalize below, our results are not limited to simple digraphs.

Recall that the \emph{adjacency matrix} $A(G)$ of any digraph $G$ is the nonnegative matrix whose $(i,j)$-th entry $a_{ij}$ is the number of arcs directed  from vertex $v_i$ to vertex $v_j$ in $G$.
The adjacency matrix entry along the diagonal $a_{ii}$ is the number of self-loops at vertex $v_i$.
The \emph{spectral radius} $\rho(G)$ of digraph $G$ is defined to be the spectral radius of $A(G)$.
In a digraph $G$, a \emph{directed walk} is an alternating sequence of vertices and arcs from $v_i$ to $v_j$ in $G$ such that
every arc in the sequence is preceded by its initial vertex and is followed by its terminal vertex. 
The \emph{length} of a directed walk is the number of arcs in the sequence, which must be one or more.
The number of distinct directed walks from $v_i$ to $v_j$ of length-$k$ in $G$ is equal to the $(i,j)$-th entry of $A(G)^k$.
The digraph $G$ is \emph{strongly connected} if and only if $A(G)$ is irreducible.
A strongly connected digraph $G$ is also characterized by an \emph{index of imprimitivity} $h(G)$ which is equal to the index of imprimitivity of $A(G)$.
Furthermore, a digraph $G$ is classified as \emph{cyclically r-partite} when $r>1$ and $r$ divides $h(G)$,  \citet[\textsection3.4]{Brualdi1991}.

The \emph{outdegree} $d^+_i$ of vertex $v_i\in V$ in the digraph $G=(V,E)$ is defined to be the number of arcs in $E$ with initial vertex $v_i$. 
Thus, the outdegree of vertex $v_i$ is equal to the $i$th row sum of the adjacency matrix $A(G)$.
This concept can be generalized to the \emph{$k$-outdegree} $d^{k+}_i$, which is the number of directed walks of length $k$ with initial vertex $v_i$.
That is, for $A(G)=(a_{ij})$,
\begin{align*}
d^+_i &\triangleq \sum_{j=1}^n a_{ij},&
d_i^{k+} &\triangleq \sum_{j=1}^n a_{ij}^{(k)},
&\mbox{ and}&&
d^{0+}_i &\triangleq 1.
\end{align*}

In a digraph, a vertex $v_i$ with no outgoing arcs (\textit{i.e.}, $d^+_i=0$) is known as a \emph{sink}.
Sinks correspond to zero rows in $A(G)$.
Thus, the results of Sections~\ref{sectMatrix} and \ref{sectEQ1} directly apply to general digraphs without sinks.
The two theorems in this section bound the spectral radii of these digraphs, while
the corollaries will show that equality in the theorems may only be achieved for very limited values of $\rho(G)$.
We complete this section with a detailed example.

We need to introduce terminology to capture the equality conditions of Section~\ref{sectEQ1} in a digraph context.
With respect to Theorem~\ref{thmX2}, we will call digraph $G=(V,E)$ \emph{average $\kappa$-outdegree regular} if 
\begin{equation*}
\frac{d_i^{\kappa+}}{d_i^{(\kappa-1)+}}=c, \text{ for all } v_i \in V,
\end{equation*}
where $\kappa\ge 1$. 
Thus for $\kappa=2$ our definition matches that of Zhang \& Li.
If $G$ is cyclically $r$-partite, the set of vertices $V$ may be partitioned into $r$ disjoint subsets $V=V_1 \cup V_2 \cup \cdots \cup V_r$ according to \eqref{superdiag}.
With respect to Theorem~\ref{thmX3}, we will call digraph $G$ \emph{average $\kappa$-outdegree $r$-quasiregular} if 
$G$ is cyclically $r$-partite and 
\begin{equation*}
\frac{d_i^{\kappa+}}{d_i^{(\kappa-1)+}}=c_j, \text{ for all } v_i \in V_j, 
\end{equation*}
where $\kappa\ge 1$. 
To be cyclically $r$-partite, all arcs joining vertices in $V_j$ either initiate in $V_{[(j-2) \bmod r]+1}$ or terminate in $V_{(j \bmod r)+1}$.
Thus for $r=2$, this condition degenerates to the bipartite-semiregular condition of Zhang \& Li.
For $\kappa =1$, we will typically drop the word ``average'' from these two new terms to be consistent with prior terminology.

\begin{thm}
\label{thmGLiu}
Let $G$ be a digraph with spectral radius $\rho(G)$, $n$ vertices, and no sinks. 
Then, for any integers $L>0$ and $k\ge0$, 
\begin{equation}
\label{eqGLiu}
\min_{1\le i\le n}  \left({\frac{d_i^{(k+L)+}}{d_i^{k+}}}\right)^{1/L}\mspace{-10.0mu} \le \rho(G) \le 
\max_{1\le i\le n}  \left({\frac{d_i^{(k+L)+}}{d_i^{k+}}}\right)^{1/L}\mspace{-10.0mu}.
\end{equation}
Moreover, if $G$ is strongly connected, then any equality in \eqref{eqGLiu} holds if and only if
$G$ is average $(k+1)$-outdegree regular or average $(k+1)$-outdegree $r$-quasiregular or both,
where $r=\gcd(L,h(G))$ and $h(G)$ is the index of imprimitivity of $G$.
\end{thm}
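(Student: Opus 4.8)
The plan is to translate Theorem~\ref{thmGLiu} directly into the matrix language already developed, applying it to $A = A(G)$. The inequality \eqref{eqGLiu} is literally \eqref{eqM1} rewritten with $r_i(A^k) = d_i^{k+}$ and $r_i(A^{k+L}) = d_i^{(k+L)+}$; since $G$ has no sinks, $A$ has nonzero row sums, so Theorem~\ref{thmM1} applies verbatim and gives the bound with no further work. So the only substantive content is the equality characterization, for which I would assume $G$ strongly connected, equivalently $A$ irreducible (as recorded in Section~\ref{sectDigraph}).

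For the equality conditions I would split into the two cases already used in Section~\ref{sectEQ1}, according to whether $A^L$ is irreducible or reducible, and invoke Lemma~\ref{lemsuperdiag} to see that this dichotomy is governed by $r = \gcd(L, h(G))$: $A^L$ is irreducible iff $r=1$. In the case $r=1$, Corollary~\ref{corM4} says equality in \eqref{eqM1} holds iff $r_i(A^{k+1})/r_i(A^k) = \rho(A)$ for all $i$, which is exactly the definition of $G$ being average $(k+1)$-outdegree regular (with $\kappa = k+1$). In the case $r>1$, Corollary~\ref{corM4a} says equality holds iff $r_i(A^{k+1})/r_i(A^k) = c_{m(i)}$ for all $i$, where $m(i)$ assigns row $i$ to one of the $r$ cyclic blocks of the $r$-partite structure \eqref{superdiag}; this is precisely the definition of $G$ being average $(k+1)$-outdegree $r$-quasiregular, and the side condition $\rho(A)^r = \prod_{\ell=1}^r c_\ell$ is automatically part of that corollary's statement. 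The phrase ``or both'' in the theorem simply covers the degenerate possibility that all the block constants $c_\ell$ coincide (so the quasiregular condition collapses to the regular one), which needs only a one-line remark, not an argument.

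The main obstacle — really the only place requiring care rather than citation — is making the bookkeeping between the graph-theoretic vocabulary and the matrix corollaries airtight: one must check that ``cyclically $r$-partite with $r = \gcd(L,h(G))$'' is the correct graph-side reading of the block decomposition \eqref{superdiag}/\eqref{reducdiag} produced by Lemma~\ref{lemsuperdiag} for the exponent $L$, and that the vertex partition $V = V_1 \cup \cdots \cup V_r$ used in the definition of $r$-quasiregularity is exactly the one induced by the permutation $P$ of that lemma. Both of these are stated in Section~\ref{sectDigraph} (the definition of cyclically $r$-partite and the fact that $G$ strongly connected $\Leftrightarrow$ $A(G)$ irreducible, with $h(G)$ the index of imprimitivity of $A(G)$), so the verification is routine. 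I would therefore write the proof as: (i) cite Theorem~\ref{thmM1} for \eqref{eqGLiu}; (ii) assume $G$ strongly connected, set $r = \gcd(L,h(G))$, and by Lemma~\ref{lemsuperdiag} note $A^L$ irreducible iff $r=1$; (iii) if $r=1$, apply Corollary~\ref{corM4} and unwind the definition; (iv) if $r>1$, apply Corollary~\ref{corM4a} and unwind the definition, noting the product relation on $\rho(G)^r$; (v) remark that the regular case is the sub-case of the quasiregular case in which all $c_\ell$ agree, which accounts for the ``or both'' wording.
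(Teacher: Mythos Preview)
Your proposal is correct and follows exactly the paper's approach: the paper's proof is two sentences, citing Theorem~\ref{thmM1} for the inequality and Corollaries~\ref{corM4} and~\ref{corM4a} for the equality conditions. Your additional bookkeeping (invoking Lemma~\ref{lemsuperdiag} to split on $r=\gcd(L,h(G))$ and unwinding the definitions of average $(k+1)$-outdegree regular/quasiregular) is the natural elaboration of those citations and matches the paper's intent precisely.
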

\begin{proof}
Applying Theorem~\ref{thmM1} to the adjacency matrix $A(G)$, we derive \eqref{eqGLiu}.
The equality conditions are justified by Corollaries~\ref{corM4} and \ref{corM4a}. 
\end{proof} 

Expression \eqref{eqGLiu} is the digraph equivalent of the main result of \citet{Liu}, but the equality conditions of Theorem~\ref{thmGLiu} are new.
Special cases of Liu's result have been rediscovered by several recent works.
With $(k,L)=(0,2)$ and $(1,2)$ Liu's result appears in \citet{Zhang} for simple, strongly connected digraphs. 
Also, the bounds of \citet{Gungor} include the bounds of Theorem~\ref{thmGLiu}, with $L=1$ and $2$, 
applied to digraphs that are simple and strongly connected.

\begin{lem}
\label{conj1}
Let $c$ be a rational number and $d$ be a nonzero, finite real number.
If the sequence $\{d \cdot c^j\}_{j=0}^\infty$ contains only integers, then $c$ is also an integer.
\end{lem}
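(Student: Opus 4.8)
The plan is to argue by contradiction, writing $c = p/q$ in lowest terms with $q \ge 2$, and showing that if $q$ has a prime factor $\pi$, then the denominators of the rationals $d \cdot c^j$ grow without bound, contradicting integrality for all $j$. First I would fix a prime $\pi$ dividing $q$ and let $\pi$-adic valuation $\nu(\cdot)$ do the bookkeeping; since $\gcd(p,q)=1$ we have $\nu(p) = 0$ and $\nu(q) = e \ge 1$, so $\nu(c) = \nu(p) - \nu(q) = -e < 0$. Then $\nu(d \cdot c^j) = \nu(d) - j e$, which tends to $-\infty$ as $j \to \infty$. But every integer $m \ne 0$ has $\nu(m) \ge 0$, so for $j$ large enough $d \cdot c^j$ cannot be a nonzero integer; and it cannot be $0$ either since $d \ne 0$ and $c \ne 0$ (if $c = 0$ then $c$ is already an integer, so we may assume $c \ne 0$ in the contradiction hypothesis). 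This forces $q = 1$, i.e.\ $c \in \mathbb{Z}$.

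The one point needing a little care is that $d$ is merely assumed to be a real number, not a rational, so "$\nu(d)$" is not literally defined. I would handle this by noting that $d \cdot c^0 = d$ must itself be an integer (the $j=0$ term of the sequence), hence $d \in \mathbb{Z} \setminus \{0\}$ and $\nu(d)$ is a well-defined nonnegative integer. After that the valuation argument above goes through verbatim: choosing any $j$ with $j e > \nu(d)$ gives $\nu(d \cdot c^j) < 0$, which is incompatible with $d \cdot c^j \in \mathbb{Z}$.

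An entirely elementary phrasing avoiding $p$-adic language, which I might prefer for a paper at this level, is the following. Since $d = d\cdot c^0 \in \mathbb{Z}$, write $c = p/q$ with $\gcd(p,q) = 1$. For each $j$, $d \cdot c^j = d p^j / q^j$ is an integer, so $q^j \mid d p^j$; since $\gcd(q, p) = 1$ implies $\gcd(q^j, p^j) = 1$, we get $q^j \mid d$ for every $j \ge 0$. A fixed nonzero integer $d$ cannot be divisible by $q^j$ for all $j$ unless $q = 1$. Hence $c = p \in \mathbb{Z}$. The main (and only) obstacle is the cosmetic one of making sure $d$ is pinned down to be a nonzero integer before dividing by it; everything else is a one-line divisibility observation.
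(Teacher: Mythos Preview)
Your proposal is correct, and the elementary phrasing you give at the end is essentially identical to the paper's own proof: write $c=p/q$ in lowest terms, note that $d\,p^j/q^j\in\mathbb{Z}$ forces $q^j\mid d$ for every $j$, hence $q=1$. You are in fact slightly more careful than the paper, which jumps straight to ``$d$ is a multiple of $q^j$'' without first observing that the $j=0$ term pins down $d\in\mathbb{Z}\setminus\{0\}$; your explicit treatment of that point is a small improvement. The $p$-adic valuation version is just the same divisibility argument in different clothing and adds no real generality here.
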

\begin{proof}
Let $c= p/q$, where $p$ and $q>0$ are coprime integers.
Since $d \cdot p^j / q^j$ is an integer, $d$ is a multiple of $q^j$, 
which cannot hold for all $j\ge 0$ unless $q=1$.
\end{proof}

\begin{cor}
\label{corGLiuEq}
Let $G$ be a strongly connected digraph.
If equality holds in \eqref{eqGLiu} for $G$ and some $k=t$, then equality holds for all $k\ge t$ and
$\rho(G)$ is the $r$th root of an integer, where $r=\gcd(L,h(G))$. 
\end{cor}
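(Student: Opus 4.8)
The plan is to combine the monotonicity result of Theorem~\ref{thmMono} with the sharp equality characterization of Theorem~\ref{thmGLiu}. First I would observe that the hypothesis is that equality holds in \eqref{eqGLiu} at some index $k=t$; write $\rho = \rho(G)$. By Theorem~\ref{thmGLiu} (via Corollaries~\ref{corM4} and \ref{corM4a}), equality at $k=t$ means that for every vertex $v_i$ the ratio $d_i^{(t+1)+}/d_i^{t+}$ equals a constant $c$ (in the average $(t+1)$-outdegree regular case) or a block-dependent constant $c_{m(i)}$ (in the $r$-quasiregular case), and in either case $x=(d_1^{t+},\ldots,d_n^{t+})^T$ is an eigenvector of $A(G)$ for $\rho$. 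By Lemma~\ref{lemEqEig}, $y=(d_1^{(t+1)+},\ldots,d_n^{(t+1)+})^T$ is then also an eigenvector of $A(G)$ for $\rho$, and $y=\rho x$; iterating, the vector of $(t+s)$-outdegrees is an eigenvector for every $s\ge 0$. Feeding this back into the equality condition of Theorem~\ref{thmGLiu} at index $k\ge t$ (its "if" direction, since the appropriate outdegree-regularity is inherited by the scaled eigenvector) shows equality holds in \eqref{eqGLiu} for all $k\ge t$.

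Next I would extract the arithmetic consequence. Take $k=t$ and consider a single vertex $v_i$. From the equality case we have $d_i^{(t+L)+}/d_i^{t+}=\rho^L$, and more usefully, iterating $y=\rho x$ gives $d_i^{(t+s)+}=\rho^{s}\,d_i^{t+}$ for all integers $s\ge 0$ in the regular case. In the $r$-quasiregular case the per-step ratios are the block constants $c_\ell$ with $\rho^r=\prod_{\ell=1}^r c_\ell$ (Theorem~\ref{thmX3}), so along any vertex the $r$-step outdegree ratio is $\rho^r$; hence $d_i^{(t+rs)+}=\rho^{rs}\,d_i^{t+}$ for all $s\ge 0$. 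In every case there is a fixed vertex $v_i$ and a nonzero real $d:=d_i^{t+}$ (a positive integer, since outdegrees are nonnegative integers and $G$ has no sinks) such that $\{\,d\cdot(\rho^r)^s\,\}_{s=0}^\infty$ is a sequence of nonnegative integers, with $r=\gcd(L,h(G))$; here I use $\rho^r$ rather than $\rho$ itself because in the $r$-quasiregular case only the $r$-step ratio is vertex-independent and integer-valued, whereas single steps cross block boundaries.

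Now set $c:=\rho^r$. I would argue $c$ is rational: $\rho$ is an eigenvalue of the integer matrix $A(G)$, hence an algebraic integer, and $\rho^r$ is likewise an algebraic integer; but $\rho^r = d_i^{(t+r)+}/d_i^{t+}$ is a ratio of integers, hence rational, and a rational algebraic integer is an integer — so in fact $c=\rho^r$ is already an integer, and a fortiori rational, so Lemma~\ref{conj1} applies and confirms it. (Alternatively, to stay purely within the paper's stated tools, one applies Lemma~\ref{conj1} directly to $\{d\cdot c^s\}$: one first needs $c$ rational, which follows because $c$ is a ratio of two positive integers.) Either way, $\rho^r$ is an integer, i.e.\ $\rho=\rho(G)$ is the $r$th root of an integer, as claimed.

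The main obstacle, and the point requiring the most care, is the bookkeeping in the $r$-quasiregular case: one must verify that equality at $k=t$ really does propagate to all $k\ge t$ — which needs the observation that scaling an eigenvector block-by-block (Lemma~\ref{lemEqEig} pushes $x\mapsto\rho x$, but one should check the block structure and the wrap-around indexing of the $c_\ell$'s from the proof of Theorem~\ref{thmX3} are preserved) — and that the vertex-wise sequence one feeds to Lemma~\ref{conj1} is genuinely $\{d\cdot(\rho^r)^s\}$ with integer $d$ and not merely eventually integral. Once the correct exponent $r$ and the correct base vertex are identified, the rationality-plus-integrality argument is short; everything else is routine substitution using \eqref{usefulx}, \eqref{rowsum_order_k}, and the equality conditions already proved.
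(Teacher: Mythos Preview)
Your argument for the second assertion (that $\rho(G)^r$ is an integer) is essentially the paper's own: fix a vertex, observe that the sequence $d_i^{(t+rs)+}=d_i^{t+}(\rho^r)^s$ consists of positive integers, note that $\rho^r$ is a ratio of such integers and hence rational, and invoke Lemma~\ref{conj1}. That part is fine.

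The propagation claim, however, is where you diverge from the paper, and where you have a genuine error. You assert that in \emph{both} the regular and the $r$-quasiregular cases the vector $x=(d_1^{t+},\ldots,d_n^{t+})^T$ is an eigenvector of $A(G)$ corresponding to $\rho$, and then invoke Lemma~\ref{lemEqEig} to get $y=\rho x$. This is false when $r>1$: in the $r$-quasiregular case one has $(Ax)_i=c_{m(i)}\,x_i$ with block-dependent constants $c_\ell$, so $Ax\neq\rho x$ in general; $x$ is an eigenvector of $A^L$ only, not of $A$. Lemma~\ref{lemEqEig} therefore does not apply as stated. You later flag that the $r$-quasiregular case ``requires the most care'' but you do not actually close the gap. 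The fix is a direct computation from the cyclic block structure of \eqref{superdiag}: one application of $A$ cyclically shifts the block constants, so average $(t+1)$-outdegree $r$-quasiregularity implies average $(t+2)$-outdegree $r$-quasiregularity, and induction on $k$ completes the argument.

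The paper sidesteps all of this by appealing directly to Theorem~\ref{thmMono}: the upper and lower bounds in \eqref{eqGLiu} are monotone in $k$, so equality at $k=t$ forces equality for every $k\ge t$ in a single line, with no case split on $r$ and no eigenvector bookkeeping. That is the cleaner route for the first assertion.
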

\begin{proof}
Theorem~\ref{thmMono} proves the first part by showing that the bounds are monotonic in $k$.
In the case that $A^L$ is irreducible $(r=1)$ and equality holds in \eqref{eqGLiu}, then by Corollary~\ref{corM4}, $c=\rho(G)$ is a rational number
and $d_i^{(j+t)+}=d_i^{t+} \rho(G)^{j}$ holds for all $j \ge0$ and all $i \in \{1,\ldots,n\}$.
Since the $k$-outdegree $d_i^{k+}$ of any vertex is integral, then by Lemma~\ref{conj1}, $\rho(G)$ is an integer.

In the case that $A^L$ is reducible $(r>1)$ and equality holds in \eqref{eqGLiu}, then by Corollary~\ref{corM4a}, $\rho(G)^r$ is a rational number
and $d_i^{(r j+t)+}=d_i^{t+} \rho(G)^{r j}$ holds for all $j \ge0$ and all $i \in \{1,\ldots,n\}$.
Thus,  $\rho(G)^r$ is an integer.
\end{proof}

Next, we apply our generalized spectral bounds of Xu \& Xu, which appear herein as Theorem~\ref{thmXuXuMat}, to digraphs.
We also formulate equality conditions similar to those in the preceding theorem.

\begin{thm}
\label{thmGNew}
Let $G=(V,E)$ be a digraph with spectral radius $\rho(G)$, $n$ vertices, and no sinks.  
Then, for any nonnegative integers $N$ and $k$, 
\begin{equation}
\label{eqGXu}
\min_{\substack{1\le i\le n\\1\le j\le n}} \left[\left({\frac{d_i^{(k+1)+} d_j^{(k+N)+}}{d_i^{k+}\, d_j^{k+}}} \right)^\frac{1}{N+1}\mspace{-12.0mu} : (v_i,v_j)\in E \right] 
\le \rho(G) \le 
\max_{\substack{1\le i\le n\\1\le j\le n}} \left[\left({\frac{d_i^{(k+1)+} d_j^{(k+N)+}}{d_i^{k+}\, d_j^{k+}}} \right)^\frac{1}{N+1}\mspace{-12.0mu} :  (v_i,v_j)\in E \right].
\end{equation}
Moreover, if $G$ is strongly connected, then equality in \eqref{eqGXu} holds if and only if
$G$ is average $(k+1)$-outdegree regular or average $(k+1)$-outdegree $r$-quasiregular or both,
where $r=\gcd(N+1,h(G))$ and $h(G)$ is the index of imprimitivity of $G$.
\end{thm}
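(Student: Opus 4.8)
The plan is to derive Theorem~\ref{thmGNew} as the digraph translation of Theorem~\ref{thmXuXuMat}, exactly paralleling the way Theorem~\ref{thmGLiu} was obtained from Theorem~\ref{thmM1}. First I would observe that specializing Theorem~\ref{thmXuXuMat} with $M=1$ gives the inequality chain \eqref{eqX1} in the form needed here: since $r_i(A^{k+M})=d_i^{(k+1)+}$, $r_j(A^{k+N})=d_j^{(k+N)+}$, $r_i(A^k)=d_i^{k+}$, and the constraint $a_{ij}^{(M)}=a_{ij}^{(1)}=a_{ij}>0$ is precisely the condition that $(v_i,v_j)\in E$, the matrix bound \eqref{eqX1} with $M=1$ becomes \eqref{eqGXu} verbatim. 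The hypothesis that $G$ has no sinks is exactly the statement that $A(G)$ has nonzero row sums, so Theorem~\ref{thmXuXuMat} applies; this establishes the bounds for all nonnegative integers $N$ and $k$ (the case $k=0$ being covered by the remark following Theorem~\ref{thmXuXuMat}).

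For the equality conditions, assume $G$ is strongly connected, i.e. $A=A(G)$ is irreducible with index of imprimitivity $h=h(G)$. Set $r=\gcd(N+1,h)=\gcd(M+N,h)$ since $M=1$. I would split into the two cases used throughout Section~\ref{sectEQ1}. If $A^{M+N}=A^{N+1}$ is irreducible, equivalently $r=1$ by Lemma~\ref{lemsuperdiag}, then Corollary~\ref{corM4} (via Theorem~\ref{thmX2}) says equality holds iff $r_i(A^{k+1})/r_i(A^k)=\rho(A)$ for all $i$, which in digraph language is $d_i^{(k+1)+}/d_i^{k+}=c$ for all $v_i\in V$ with $c=\rho(G)$ — that is, $G$ is average $(k+1)$-outdegree regular. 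If instead $A^{N+1}$ is reducible, equivalently $r>1$, then Corollary~\ref{corM4a} (via Theorem~\ref{thmX3}) says equality holds iff $r_i(A^{k+1})/r_i(A^k)=c_{m(i)}$, where the blocks come from the cyclic normal form \eqref{superdiag}; translating, this says $G$ is cyclically $r$-partite and $d_i^{(k+1)+}/d_i^{k+}=c_\ell$ for all $v_i$ in the $\ell$th part $V_\ell$, i.e. $G$ is average $(k+1)$-outdegree $r$-quasiregular, with $\rho(G)^r=\prod_{\ell=1}^r c_\ell$. The ``or both'' in the statement accounts for the degenerate situation in which a digraph simultaneously satisfies both definitions (e.g. when all the $c_\ell$ coincide, so the $r$-quasiregular condition reduces to the regular one); combining the two cases gives the stated biconditional.

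The one point requiring a little care — and the only place I expect friction — is checking that the block-partition bookkeeping of Lemma~\ref{lemsuperdiag} and Theorem~\ref{thmX3} translates correctly into the digraph notion of ``cyclically $r$-partite'' as defined just before Theorem~\ref{thmGLiu}, so that the mapping $m(i)$ assigning rows to blocks matches the partition $V=V_1\cup\cdots\cup V_r$ and the adjacency structure ``all arcs out of $V_\ell$ land in $V_{(\ell\bmod r)+1}$'' is exactly the superdiagonal block pattern \eqref{superdiag}. Once that identification is made explicit, the proof is a direct invocation of Theorem~\ref{thmXuXuMat} and Corollaries~\ref{corM4} and~\ref{corM4a}, so the write-up can be kept to a few sentences.

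\begin{proof}
Apply Theorem~\ref{thmXuXuMat} to the adjacency matrix $A=A(G)$ with $M=1$. The no-sink hypothesis means every row sum of $A$ is nonzero (for $k=0$ this assumption is unnecessary), and $r_i(A^k)=d_i^{k+}$, $r_i(A^{k+1})=d_i^{(k+1)+}$, $r_j(A^{k+N})=d_j^{(k+N)+}$, while $a_{ij}^{(1)}=a_{ij}>0$ holds exactly when $(v_i,v_j)\in E$. Substituting these into \eqref{eqX1} gives \eqref{eqGXu}.

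Now suppose $G$ is strongly connected, so $A$ is irreducible with index of imprimitivity $h=h(G)$, and put $r=\gcd(N+1,h)$. Since $M+N=N+1$, Lemma~\ref{lemsuperdiag} says $A^{N+1}$ is irreducible precisely when $r=1$. If $r=1$, Corollary~\ref{corM4} shows equality in \eqref{eqGXu} holds if and only if $d_i^{(k+1)+}/d_i^{k+}=\rho(G)$ for all $v_i\in V$, i.e. $G$ is average $(k+1)$-outdegree regular. If $r>1$, Corollary~\ref{corM4a} shows equality holds if and only if, after the symmetric permutation bringing $A$ into the cyclic form \eqref{superdiag} with vertex partition $V=V_1\cup\cdots\cup V_r$, one has $d_i^{(k+1)+}/d_i^{k+}=c_\ell$ for all $v_i\in V_\ell$ and all $\ell$, with $\rho(G)^r=\prod_{\ell=1}^r c_\ell$; this is exactly the statement that $G$ is average $(k+1)$-outdegree $r$-quasiregular. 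A digraph may satisfy both descriptions at once (for instance when all $c_\ell$ are equal), which accounts for the ``or both'' in the statement. Combining the two cases yields the asserted equality condition.
\end{proof}
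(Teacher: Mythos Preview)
Your approach is essentially the same as the paper's: apply Theorem~\ref{thmXuXuMat} with $M=1$ to $A(G)$ for the bounds, then invoke the equality characterizations of Section~\ref{sectEQ1}. The substance is correct.

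One citation slip to fix: you invoke Corollaries~\ref{corM4} and~\ref{corM4a}, but those corollaries are specifically the $N=0$ specializations that transfer the equality conditions to \eqref{eqM1}, not to \eqref{eqX1}. Since \eqref{eqGXu} is the digraph form of \eqref{eqX1} with $M=1$ and general $N\ge 0$, you should cite Theorems~\ref{thmX2} and~\ref{thmX3} directly (as the paper does), not their corollaries. Your parenthetical ``(via Theorem~\ref{thmX2})'' shows you already know where the content lives; just swap the references.
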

\begin{proof}
We apply Theorem~\ref{thmXuXuMat} with $M=1$ to the adjacency matrix $A(G)$ of digraph $G$ to yield \eqref{eqGXu}.
Theorems~\ref{thmX2} and \ref{thmX3} justify the equality conditions.
\end{proof}
\begin{rmk}
We limit this theorem to $M=1$ for simplicity in expressing \eqref{eqGXu}.
\end{rmk}

A special case of \eqref{eqGXu} with $(k,N)=(1,1)$ appears in \citet{Xu} for simple digraphs that are, in the case of the lower bound, strongly connected.
The proofs within \cite{Xu} that are required for the $(1,1)$ bound appear easily generalizable to digraphs with self-loops and multiarcs.
The bounds of G\"{u}ng\"{o}r \& Das include the bounds of Theorem~\ref{thmGNew} with $N=1$,
but they are limited to simple, strongly connected digraphs.
The other realizations of \eqref{eqGXu} are new and may be tighter than the prior bounds. 


\begin{cor}
\label{corGXuEq}
Let $G$ be a strongly connected digraph.
If equality holds in \eqref{eqGXu} for $G$ and some $k\ge0$ and $N\ge 0$, then 
$\rho(G)$ is the $r$th root of an integer, where $r=\gcd(N+1,h(G))$. 
\end{cor}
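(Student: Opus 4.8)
The plan is to follow the template of Corollary~\ref{corGLiuEq}, replacing the equality characterization of Theorem~\ref{thmGLiu} by that of Theorem~\ref{thmGNew} (i.e.\ Theorems~\ref{thmX2} and \ref{thmX3}) and finishing with Lemma~\ref{conj1}. Since $G$ has no sinks, every row sum of $A(G)$ is at least $1$, so $\rho(G)\ge 1>0$ by Theorem~\ref{thmF}; this lets us divide by $k$-outdegrees and invoke Lemma~\ref{lemEqEig} freely. Because $M=1$ in Theorem~\ref{thmGNew}, the relevant exponent is $N+1$, and by Lemma~\ref{lemsuperdiag} the matrix $A(G)^{N+1}$ is irreducible precisely when $r=\gcd(N+1,h(G))=1$; so I split into the cases $r=1$ and $r>1$.

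For $r=1$: equality in \eqref{eqGXu} forces, via Theorem~\ref{thmX2} (equivalently, via the eigenvector condition of Theorem~\ref{thmXuXuMat}), that $d_i^{(k+1)+}=\rho(G)\,d_i^{k+}$ for every $i$; hence $\rho(G)$ is a ratio of positive integers, and iterating this relation (or Lemma~\ref{lemEqEig}) gives $d_i^{(k+j)+}=d_i^{k+}\rho(G)^{j}$ for all $j\ge0$. Applying Lemma~\ref{conj1} with $c=\rho(G)$ rational and $d=d_i^{k+}$ a fixed nonzero real, the integrality of all $k$-outdegrees forces $\rho(G)$ to be an integer, i.e.\ the first root of an integer.

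For $r>1$: equality in \eqref{eqGXu} gives, by Theorem~\ref{thmX3}, block constants $c_1,\ldots,c_r$ with $d_i^{(k+1)+}/d_i^{k+}=c_{m(i)}$ and $\rho(G)^r=\prod_{\ell=1}^r c_\ell$. Reusing the ratio-chaining of the converse half of the proof of Theorem~\ref{thmX3} (equivalently, observing that as $t$ increases the ratios $d_i^{(k+t+1)+}/d_i^{(k+t)+}$ run cyclically through $c_{m(i)},c_{m(i)+1},\ldots$ with wraparound), one obtains $d_i^{(k+rj)+}/d_i^{k+}=\bigl(\prod_{\ell=1}^r c_\ell\bigr)^{j}=\rho(G)^{rj}$ for every $i$ and every $j\ge0$, since a product of any $r$ cyclically consecutive terms of the length-$r$ sequence $c_1,\ldots,c_r$ equals $\prod_{\ell=1}^r c_\ell$. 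In particular $\rho(G)^r=d_i^{(k+r)+}/d_i^{k+}$ is rational, so Lemma~\ref{conj1} applied to the all-integer sequence $\{d_i^{k+}(\rho(G)^r)^{j}\}_{j\ge0}$ shows $\rho(G)^r$ is an integer, whence $\rho(G)$ is the $r$th root of an integer. The main obstacle is this $r>1$ case: unlike when $A(G)^{N+1}$ is irreducible, $x=(d_1^{k+},\ldots,d_n^{k+})^T$ need not be an eigenvector of $A(G)$, so one must extract $\rho(G)^r$ rather than $\rho(G)$ from the block-constant structure and verify the cyclic-product identity; the ``or both'' clause in Theorem~\ref{thmGNew} causes no trouble, since under either description the displayed ratio identities remain valid.
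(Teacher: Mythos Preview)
Your argument is correct, but it takes a different route from the paper's. The paper does not re-run the integrality argument at all: instead it invokes the observation (stated after Theorem~\ref{thmXuXuMat}) that the Liu bound with $L=M+N$ is always at least as tight as the Xu--Xu bound, giving the sandwich
\[
\rho(G)^{N+1}\le \max_i \frac{d_i^{(k+N+1)+}}{d_i^{k+}}\le \max_{(v_i,v_j)\in E}\frac{d_i^{(k+1)+}d_j^{(k+N)+}}{d_i^{k+}d_j^{k+}},
\]
so equality in \eqref{eqGXu} forces equality in \eqref{eqGLiu} with $L=N+1$, and Corollary~\ref{corGLiuEq} then delivers the conclusion in one line. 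Your approach bypasses this reduction and instead applies Theorems~\ref{thmX2}/\ref{thmX3} directly to the Xu--Xu equality, then repeats verbatim the case split and the Lemma~\ref{conj1} argument from the proof of Corollary~\ref{corGLiuEq}. What the paper's route buys is brevity and modularity (no need to re-establish the cyclic-product identity for the $r>1$ case); what your route buys is self-containment---it never appeals to the tightness comparison between the two families of bounds, so it would survive even in a presentation that omitted that comparison.
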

\begin{proof}
As discussed in Section~\ref{sectMatrix}, the bounds of Theorem~\ref{thmM1} with $L=M+N$ are at least as tight
as the bounds of Theorem~\ref{thmXuXuMat}.  
Therefore, with respect to the upper bounds,
\begin{equation}
\label{eqGXutight}
\rho(G)^{N+1} \le 
\max_{1\le i\le n}  \left[{\frac{d_i^{(k+N+1)+}}{d_i^{k+}}}\right]  \le
\max_{\substack{1\le i\le n\\1\le j\le n}} \left[{\frac{d_i^{(k+1)+} d_j^{(k+N)+}}{d_i^{k+}\, d_j^{k+}}}  :  (v_i,v_j)\in E \right].
\end{equation}
Assuming equality holds on the right side of \eqref{eqGXu}, then equality holds throughout \eqref{eqGXutight}.
Since $G$ is strongly connected, we may apply Corollary~\ref{corGLiuEq} to the left equality in \eqref{eqGXutight}, proving
that $\rho(G)$ is the $r$th root of an integer.  
Starting with equality on the left side of \eqref{eqGXu}, yields that same result.
\end{proof}

Just as deleting the zero rows and their corresponding columns preserved the spectral radius in Section~\ref{sectMatrix},
the removal of any sinks from the digraph $G$ leaves $\rho(G)$ undisturbed.  
Thus, this simple modification allows us to extend the bounds of this section to general digraphs.
Additionally, removing sources from the digraph may tighten the bounds.

\begin{figure}[h!]
\centering
\includegraphics[width=1.37 in]{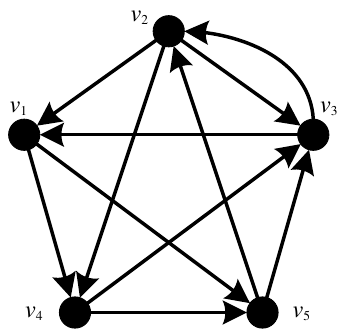}
\caption{Digraph $G_1$ having adjacency matrix $A(G_1)$}
\label{fig1}
\end{figure}

\begin{exmp}
\label{Ex2}
The order-$5$ example presented in \cite{Xu} and \cite{Gungor} provides a useful illustration. 
Given the digraph $G_1=(V,E)$ shown in Fig.~\ref{fig1}, we find the $5\times5$ adjacency matrix to be
\begin{equation*}
A(G_1)= 
\begin{pmatrix}
0&0&0&1&1\\
1&0&1&1&0\\
1&1&0&0&0\\
0&0&1&0&1\\
0&1&1&0&0
\end{pmatrix}.
\end{equation*}
The spectral radius of $G_1$ is $\rho(G_1) \approx 2.193399638$. 
First, we examine Theorem~\ref{thmGLiu}.
Table~\ref{Table1} shows the 
quantities corresponding to each vertex $v_i \in V$ needed to evaluate \eqref{eqGLiu},
for all values of $(k,L)$ such that $L+k\le 4$.
The minimum and maximum of each, shown on the right side of the table, form the bounds on $\rho(G_1)$.
\renewcommand{\arraystretch}{1.1} 
\begin{table}[h!]
\caption{Intermediate Computations and Bounds of Theorem~\ref{thmGLiu} for all $(k,L)$, such that $L+k \le 4$.}
\label{Table1}
\begin{center}
\begin{tabular}{r||l|l|l|l|l||l|l}
\hline
{\bfseries $(k,L)$ Bound} &  &  &  &  &  &  & \\
{\bfseries Parameters} & {$i=1$} & {$i=2$} & {$i=3$} & {$i=4$} & {$i=5$} & {$\min$} & {$\max$}\\
\hline\hline
{\bfseries $d_i^+              \quad (0,1)$} &	2 &	3 &	2 &	2 &	2 &	2 &	3 \\
{\bfseries $\sqrt{d_i^{2+}}\quad (0,2)$} &	2 &	2.4495 &	2.2361 &	2 &	2.2361 &	2 &	2.4495 \\
{\bfseries $d_i^{2+}/d_i^{1+}\quad  (1,1)$} & 	2 &	2 &	2.5 &	2 &	2.5 &	2 &	2.5 \\  
\hline\noalign{\smallskip}
{\bfseries $\sqrt[3]{d_i^{3+}}\quad  (0,3)$} &	2.0801 &	2.3513 &	2.1544 &	2.1544 &	2.2240 &	2.0801 &	2.3513 \\
{\bfseries $\sqrt{d_i^{3+}/d_i^{1+}}\quad  (1,2)$} &	2.1213 &	2.0817 &	2.2361 &	2.2361 &	2.3452 &	2.0817 &	2.3452 \\
{\bfseries $d_i^{3+}/d_i^{2+}\quad  (2,1)$} & 	2.25 &	2.1667 &	2 &	2.5 &	2.2 &	2 &	2.5 \\
\hline\noalign{\smallskip}
{\bfseries $\sqrt[4]{d_i^{4+}}\quad  (0,4)$} &	2.1407 &	2.3206 &	2.1657 &	2.1407 &	2.1899 &	2.1407\textdagger &	2.3206 \\
{\bfseries $\sqrt[3]{d_i^{4+}/d_i^{1+}}\quad  (1,3)$} & 	2.1898 &	2.1302 &	2.2240 &	2.1898 &	2.2572 &	2.1302 &	2.2572\textdagger \\
{\bfseries $\sqrt{d_i^{4+}/d_i^{2+}}\quad  (2,2)$} & 	2.2913 &	2.1985 &	2.0976 &	2.2913 &	2.1448 &	2.0976 &	2.2913 \\
{\bfseries ${d_i^{4+}/d_i^{3+}}\quad  (3,1)$} & 	2.3333 &	2.2308 &	2.2 &	2.1 &	2.0909 &	2.0909 &	2.3333 \\
\hline
\end{tabular}
\end{center}
\end{table}
The bound corresponding to $(k,L)=(1,2)$ is the tightest of the bounds here for $L+k \le 3$.
When extended to $L+k=4$, the bounds using $(k,L)=(0,4)$ and $(1,3)$ yield the tightest lower and upper bounds, respectively, as indicated with a ``\textdagger''.

\renewcommand{\arraystretch}{1.1} 
\begin{table}[h!]
\caption{Lower and Upper Bounds on  $\rho(G_1)$ from Theorem~\ref{thmGNew}.}
\label{Table2}
\begin{center}
\begin{tabular}{r||l|l}
\hline
{\bfseries $(k,N)$ Bound} & {\bfseries Lower Bound }   & {\bfseries Upper Bound }\\
{\bfseries Parameters} &      {\bfseries on $\rho(G_1)$} & {\bfseries on $\rho(G_1)$}\\
\hline\hline
{\bfseries $(0,1) $ }&2 & 2.4495 \\
{\bfseries $(0,2) $ }&2 & 2.4662 \\
{\bfseries $(1,1) $ }&2 & 2.5 \\
\hline
{\bfseries $(0,3) $ }&2.0598 & 2.3403\textdaggerdbl \\
{\bfseries $(1,2) $ }&2.0801 & 2.3208\textdaggerdbl \\
{\bfseries $(2,1) $ }&2.0817 & 2.3717 \\
\hline
{\bfseries $(0,4) $ }&2.1118 & 2.3116 \\
{\bfseries $(1,3) $ }&2.1407 & 2.2900 \\
{\bfseries $(2,2) $ }&2.1204 & 2.2774 \\
{\bfseries $(3,1) $ }&2.0954 & 2.2815 \\ 
\hline
\end{tabular}
\end{center}
\end{table}

Theorem~\ref{thmGNew} yields the bounds shown in Table~\ref{Table2}.
In three of four cases the bounds of Theorem~\ref{thmGNew} with $N=1$ produced tighter bounds than Theorem~\ref{thmGLiu} with $L=1$. 
Also, the bounds indicated with a ``\textdaggerdbl'' are tighter than the bounds of the first table for the same maximum order of outdegree computed.

\renewcommand{\arraystretch}{1.1} 
\begin{table}[h!]
\caption{Lower and Upper Bounds on  $\rho(G_1)$ from \eqref{eqKoloSum}.}
\label{Table3}
\begin{center}
\begin{tabular}{r||l|l}
\hline
{\bfseries $(k,L)$ Bound} & {\bfseries Lower Bound }   & {\bfseries Upper Bound }\\
{\bfseries Parameters}               &      {\bfseries on $\rho(G_1)$} & {\bfseries on $\rho(G_1)$}\\
\hline\hline
{\bfseries $(0,1)$}&2*                              &2.4495@$\alpha=0.50$ \\
{\bfseries $(1,1)$}&2*                              &2.5000* \\
{\bfseries $(2,1)$}&2.0801@$\alpha=0.50$ &2.3602@$\alpha=0.55$ \\
{\bfseries $(3,1)$}&2.0993@$\alpha=0.92$ &2.2611@$\alpha=0.70$ \\
\hline
\end{tabular}
\end{center}
\end{table}

Finally, we show the Kolotilina-based bounds of \eqref{eqKoloSum} in Table~\ref{Table3}.
Recall that Theorem~\ref{thmGNew} limited the bounds of Theorem~\ref{thmXuXuMat} to the case where $M=1$ to
keep the sparsity pattern logic simple in the terminology of graphs.
Likewise, in evaluating \eqref{eqKoloSum}, we limit consideration of $L$ to $1$.
In Table~\ref{Table3} we have used an ``*'' to indicate which bounds are independent of the $\alpha$ parameter.
The best lower bounds of Theorem~\ref{thmGNew} were equal to those produced by Theorem~\ref{thmGLiu} but tighter 
than those produced by \eqref{eqKoloSum}.
Also, Theorem~\ref{thmGNew} produced tighter upper bounds compared with either \eqref{eqKoloSum} or Theorem~\ref{thmGLiu} 
when the maximum order of outdegree was limited to three.
However, Theorem~\ref{thmGNew} produced slightly looser upper bounds when the maximum order of outdegree was limited to four.

In this example, the worst set of $(k,N)$ parameters for Theorem~\ref{thmGNew} is from \citet{Xu} (\textit{i.e.,} $(1,1)$).
However, we have found that the best set of parameters depends on the digraph selected.
For digraphs that are sparser than $G_1$, the advantages of Theorem~\ref{thmGNew} and \eqref{eqKoloSum} will be even more evident.

We note that the bipartite condition (\textit{i.e.}, cyclically $r$-partite with $r=2$ herein) was sometimes unmentioned in \citet{Xu} and \citet{Gungor} 
when defining their semiregular digraph condition.
Its necessity is apparent in this example.
The digraph $G_1$ might be outdegree semiregular and average 2-outdegree semiregular by such looser definitions, 
but it is not bipartite and hence does not meet the bounds with equality.
\end{exmp}


\section{Conclusions} %


We have generalized the bounds and equality conditions of several prior works regarding the spectral radius of digraphs.
Much of the earlier work applied to irreducible matrices and strongly-connected simple digraphs.
We have generalized these to a larger set of bounds and a more general set of digraphs.

We have also cited the contributions of \citet{Liu}, missing in more recent works, and added digraph related equality conditions to Liu's bounds.
Our generalization of the bounds by Xu \& Xu are novel and sometimes outperform the prior bounds.
Finally, we have shown that the equality conditions of the bounds, when applied to strongly connected digraphs, may only be met 
when the spectral radius is the $r$th root of an integer.

\section*{Acknowledgment} 
The authors would like to thank the anonymous reviewer for greatly simplifying the proof of Theorem~\ref{thmXuXuMat}
and providing other insight which improved this work.



\end{document}